\newcommand{\FVS}{\mathbf{V}\left({\Sigma}\right)}
\newcommand{\FV}{\mathbf{V}}
\newcommand{\FVSo}{\mathbf{V}_0\left({\Sigma}\right)}
\newcommand{\sk}{\smallskip}
\newcommand{\Aff}{\mathrm{Aff}(\mathbb{C})}
\newcommand{\Afr}{\mathrm{Aff}_{\mathbb{R^*_+}}(\mathbb{C})}
\newcommand{\C}{\mathbb{C}}
\newcommand{\Z}{\mathbb{Z}}
\newcommand{\R}{\mathbb{R}}
\newcommand{\Hy}{\mathbb{H}}
\newcommand{\SL}{\mathrm{SL}}
\newcommand{\SLtR}{\mathrm{SL}_2 \! \left( \R \right)}
\newcommand{\D}{\mathbb{D}}
\newcommand{\pD}{\partial \mathbb{D}}
\newcommand{\DD}{\operatorname{D}}
\newtheorem{theorem}{Theorem}
\newtheorem*{theorem*}{Theorem}
\newtheorem{lemma}{Lemma}
\newtheorem{proposition}{Proposition}
\newtheorem*{remark}{Remark} 
\theoremstyle{definition}
\newtheorem{definition}{Definition}
\begin{document}

\newcommand{\GMP}{two-chamber}
\newcommand{\DbDu}{disco }

\title{Affine surfaces and their Veech groups.}
\author{Eduard Duryev}
\author{Charles Fougeron}
\author{Selim Ghazouani}
\maketitle

\begin{center}

{\it To the memory of William Veech}

\end{center}

\begin{abstract}
We introduce a class of objects which we call \textit{'affine surfaces'}. 
These provide families of foliations on surfaces whose dynamics we are interested in. 
We present and analyze a couple of examples, and we define concepts related to these 
in order to motivate several questions and open problems. In particular we generalise the notion of Veech group to affine surfaces, and we prove a structure result about these Veech groups.
\end{abstract}

\section*{Notations}

\begin{itemize}
\item $\Sigma_g$ or only $\Sigma$ is a compact surface of genus $g \geq 2$;
\item $\mathrm{Gl}_2^+(\R)$ is the group of 2 by 2 matrices whose determinant is positive;
\item $\SLtR$ is the group of 2 by 2 matrices whose determinant is equal to $1$;
\item $\Aff$ is the one dimensional affine complex group, $\C^* \ltimes \C$;
\item $\Afr$ is the subgroup of $\Aff$ of elements whose linear parts are real positive;
\item $\Hy$ is the upper half plane of $\C$;

\end{itemize}

\section{Introduction.}

A \textit{translation} structure on a surface is a geometric structure
modelled on the complex plane $\C$ with structural group the set of
translations. A large part of the interest that these structures have
drawn lies in the directional foliations inherited from the standard
directional foliations of $\C$ (the latter being invariant under the
action of translations). Examples of such structures are polygons
whose sides are glued along parallel sides \textit{of same length},
see Figure \ref{translation} below.

\begin{figure}[!h]
  \centering
  \includegraphics[scale=0.3]{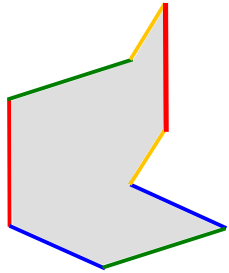}
  \caption{A translation surface of genus $2$.}
  \label{translation}
\end{figure}

The directional foliation, say in the horizontal direction, can be
drawn very explicitly: a leaf is a horizontal line until it meets a
side, and continues as the horizontal line starting from the point on
the other side to which it is identified.  These foliations have been
more than extensively studied over the past forty years.  They are
very closely linked to one dimensional dynamical systems called
\textit{interval exchange transformations}, and most of the basic
features of these objects (as well as less basic ones !) have long
been well understood, see \cite{Zorich} for a broad and clear survey
on the subject.

\vspace{2mm} The starting point of this article is the following
remark: to define the horizontal foliation discussed in the example
above, there is no need to ask for the sides glued together to have
same length, but only their being parallel in which case we can glue
along affine identifications. In terms of geometric structures, it
means that we extend the structural group to all the transformation of
the form $z \mapsto az +b$ with $a \in \R^*_+$ and $b \in
\C$. Formally, these corresponds to (branched) complex affine
structures whose holonomy group lies in the subgroup of $\Aff$ whose
linear parts are real positive. A simple example of such an
\textit{'affine surface'} is given by the gluing below:

\begin{figure}[!h]
  \centering
  \includegraphics[scale=0.3]{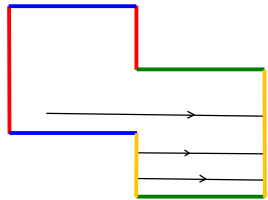}
  \caption{An \textit{'affine surface'} of genus $2$ and a leaf of its
    horizontal foliation.}
  \label{example}
\end{figure}

A notable feature of these affine surfaces is that they present
dynamical behaviours of \textit{hyperbolic} type: the directional
foliations sometimes have a closed leaf which 'attracts' all the
nearby leaves. This is the case of the closed leave drawn in black on
Figure \ref{example2} below. This situation is in sharp contrast with
the case of translation surfaces and promise a very different picture
in the affine case.

\begin{figure}[!h]
  \centering
  \includegraphics[scale=0.3]{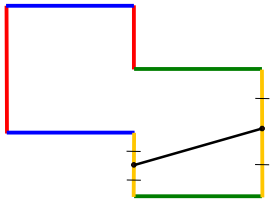}
  \caption{A \textit{'hyperbolic'} closed leaf.}
  \label{example2}
\end{figure}

It is somewhat surprising to find no systematic study of these
\textit{'affine surfaces'} in the literature. However, related objects
and concepts have kept poping up every now and then, both of geometric
and dynamical nature. To our knowledge, their earliest appearance is
in the work of Prym on holomorphic $1$-forms with values in a flat
bundle, see \cite{Prym}. These provide an algebraico-geometric
interpretation of these affine surfaces, see also
Mandelbaum(\cite{Mandelbaum, Mandelbaum2}) and Gunning
(\cite{Gunning}). We would also like to mention Veech's remarkable
papers \cite{Veech} and \cite{Veech1} where he investigates moduli
spaces of complex affine surfaces with singularities as well as
Delaunay partitions for such surfaces.  On the dynamical side, the
first reference to related questions can be found in Levitt's paper on
foliations on surfaces, \cite{Levitt2} where he builds an affine
interval exchange (AIET) with a wandering interval (these AIETs must
be thought of as the one-dimensional reduction of the foliations we
are going to consider). It is followed by a serie of works initiated
by Camelier and Gutierrez \cite{CamelierGutierrez} and pursued by
Bressaud, Hubert and Maass \cite{BressaudHubertMaass}, and Marmi,
Moussa, and Yoccoz \cite{MarmiMoussaYoccoz}. They generalize a well
known construction of Denjoy to build out of a standard IET an AIET
having a wandering interval, behaviour which is (conjecturally) highly
non-generic. Very striking is that the question of the behavior of a
typical AIET has been very little investigated. In this direction, we
mention the nice article of Liousse \cite{Liousse} where the author
deals with the topological generic behaviour of transversely affine
foliations on surfaces.

\subsection*{Contents of the paper and results}

After introducing formal definitions as well as a couple of
interesting examples of \textit{'affine surfaces'}, we prove a
structure result about Veech groups of affine surfaces.

The Veech group of an affine surface $\Sigma$ is the straightforward
generalization of its translation analogue: it is the subgroup of
$\SLtR$ made of linear parts of locally affine transformation of
$\Sigma$. It is a well-known fact that the Veech group of a
translation surface is always discrete. This fails to be true in the
more general case of affine surface, although the examples of surfaces
whose Veech group is not discrete are fairly distinguishable. We
completely describe the class of surfaces whose Veech group is not
discrete. Roughly, those are the surface obtained starting from a
ribbon graph and gluing to its edges a finite number of
\textit{'affine cylinders'}. We call such surfaces \textbf{Hopf
  surfaces}, because they must be thought of as higher genus analogues
of Hopf tori, that are quotients $\C^* /_{(z \sim \lambda z)}$ with
$\lambda$ a positive constant different from $1$. Precisely we prove

\newtheorem*{thm:classification}{Theorem \ref{classification}}
\begin{thm:classification}
  Let $\Sigma$ be an affine surface of genus $\geq 2$.
  There are two possible cases :

  \begin{enumerate}

  \item $\FVS$ is the subgroup of upper triangular elements of $\SLtR$ and $\Sigma$ is a Hopf surface.

  \item $\FVS$ is discrete.

  \end{enumerate}
\end{thm:classification}

We also prove the following theorem on the existence of closed
geodesics in genus $2$ :

\newtheorem*{thm:cylinderingenus2}{Theorem \ref{cylinderingenus2}}
\begin{thm:cylinderingenus2}
  Any affine surface of genus $2$ has a closed regular geodesic.
\end{thm:cylinderingenus2}

The proof is elementary and relies on combinatoric
arguments. Nonetheless it is a good motivation for a list of open
problems we address in Section \ref{open}. We end the article with a
short appendix reviewing Veech's results on the geometry of affine
surfaces contained in the article \cite{Veech1} and the unpublished
material \cite{VeechU} that W. Veech kindly shared.

\subsection*{About Bill Veech's contribution.}

Bill Veech's sudden passing away encouraged us to account for his
important contribution to the genesis of the present article. About
twenty years ago, he published a very nice paper called {\it Delaunay
  partitions } in the journal \textit{Topology} (see \cite{Veech1}),
in which he investigated the geometry of complex affine surfaces (of
which our 'affine surfaces' are particular cases). A remarkable result
contained in it is that affine surfaces all have geodesic
triangulations in the same way flat surfaces have. We used it
extensively when we first started working on affine surfaces,
overlooking the details of \cite{Veech1}. But at some point, we
discovered a familly of affine surfaces that seemed to be a
counter-example to Veech's result and which provides an obstruction
for affine surfaces to have a geodesic triangulation. We then decided
to contact Bill Veech, who replied to us almost instantly with the
most certain kindness. He told us that he realized the existence of
the mistake long ago, but since the journal \textit{Topology} no
longer existed and that the paper did not draw a lot of attention, he
did not bother to write an erratum. However, he shared with us courses
notes from 2008 in which he \textit{'fixed the mistake'}. It was a
pleasure for us to discover that in these long notes (more than 100p)
he completely characterizes the obstruction for the slightly flawed
theorem of \textit{Delaunay partitions} to be valid, overcoming
serious technical difficulties. We extracted from the notes the
Proposition \ref{closedgeodesic} which is somewhat the technical
cornerstone of this paper.

A few weeks before his passing away, Bill Veech allowed us to
reproduce some of the content of his notes in an appendix to this
article. It is a pity he did not live to give his opinion and modify
accordingly to his wishes this part of the paper.

\subsection*{Acknowledgements.}

We are very grateful to Vincent Delecroix, Bertrand Deroin, Pascal
Hubert, Erwan Lanneau, Leonid Monid and William Veech for interesting
discussions. The third author is grateful to Luc Pirio for introducing
him to the paper \cite{Veech1}. The third author acknowledges partial
support of ANR Lambda (ANR-13-BS01-0002).

\section{Affine surfaces.}
\label{basics}

We give in this section formal definitions of \textit{affine surfaces}
and several concepts linked to both their geometry and dynamics.

\subsection{Basics.}

An affine structure on a complex manifold $M$ of dimension $n$ is an
atlas of chart $(U_i, \varphi)$ with values in $\C^n$ such that the
transition maps belong to
$\mathrm{Aff}_n(\C) = \mathrm{Gl}_n(\C) \ltimes \C^n$. It is well
known that the only compact surfaces (thought of as $2$-dimensional
real manifolds) carrying an affine structure are tori. We make the
definition of an affine structure less rigid, by allowing a finite
number of points where the structure is singular, in order to include
the interesting examples mentioned in the introduction:

\begin{definition}

  \begin{enumerate}
  \item An \textbf{affine surface} $\mathcal{A}$ on $\Sigma$ is a
    finite set $\mathcal{S} = \{ s_1, \ldots, s_n \} \subset \Sigma$
    together with an affine structure on
    $\Sigma \setminus \mathcal{S}$ such that the latter extends to a
    euclidean cone structure of angle a multiple of $2\pi$ at the
    $s_i$'s.

  \item A \textbf{real affine surface} is an affine surface whose
    structural group has been restricted to $\Afr$.
  \end{enumerate}
\end{definition}

The type of singularities we allow results of what seems to be an
arbitrary choice. We could have as well allowed singular points to
look like affine cones, or the angles to be arbitrary. We will justify
our choice very soon. A first important remark is that an affine
surface satisfy a \textit{discrete Gauss-Bonnet} equality. If
$\mathcal{S} = \{ s_1, \ldots, s_n \}$ is the set of singular points
of an affine structure on $\Sigma$; and $k_i \geq 2$ is the integer
such that the cone angle at $s_i$ is $2k_i\pi$, then

$$ \sum_{i=1}^n{1-k_i} = \chi(\Sigma) = 2-2g $$ 

From now on and until the end of the paper, we will consider only
\textit{real affine surfaces}, and therefore we will refer to those
only as \textit{affine surface}.

\vspace{2mm} A general principle with geometric structures is that any
object that is defined on the model and is invariant under the
transformation group is well defined on the manifolds carrying such a
structure. In our case the model is $\C$ with structure group
$\Afr$. Among others, angles and (straight) lines are well defined on
affine surfaces. More striking is the fact that it makes sense to say
that the orientation of a line is well defined, and for each angle
$\theta \in S^1$ we can define a \textit{foliation} oriented by
$\theta$ that we denote by $\mathcal{F}_{\theta}$, whose leaves are
exactly the lines oriented by $\theta$.  \\ Finally remark that
although the speed of a path is only defined up to a fixed constant,
it makes sense to say that a path has constant speed (speed which is
not itself well defined), as well as to say that a path has finite or
infinite length.

\vspace{2mm} Formally:

\begin{itemize}

\item a \textbf{geodesic} is an affine immersion of a segment $]a,b[$
  ($a$ or $b$ can be $\pm \infty$);

\item a \textbf{saddle connection} is a geodesic joining two singular
  points;

\item a \textbf{leaf} of a directional foliation is a maximal geodesic
  in the direction of the foliation;

\item a \textbf{closed geodesic} (or \textbf{closed leaf} if the
  direction of the foliation is unambiguous) is an affine embedding of
  $\mathbb{R}/\Z$;

\item the first return on a little segment orthogonal to such a closed
  geodesic is a map of the form $x \mapsto \lambda x$ with
  $\lambda \in \R^*_+$. We say it is \textbf{flat} if $\lambda = 1$
  and that it is \textbf{hyperbolic} otherwise.

\end{itemize}

\subsection{Cylinders.}
\label{cylinders}

These definitions being set, we introduce a first fundamental example,
the \textit{Hopf torus}.  Consider a real number $\lambda \neq 1$ and
identify every two points on $\mathbb C^*$ which differ by scalar
multiplication by $\lambda$. The quotient surface
$\mathbb C^*/ (z \sim \lambda z)$ through this identification is a
called a \textit{Hopf torus} and we call $\lambda$ its \textit{affine
  factor}.

\begin{figure}[!h]
  \includegraphics[width=0.3 \linewidth]{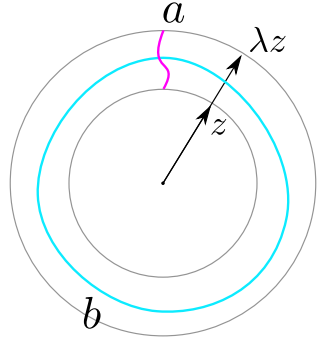}
  \caption{Hopf's torus}
  \label{Hopf}
\end{figure}

These provide a $1$-parameter family of affine structures on the
torus.  These surfaces have a very specific kind of
dynamics. Foliations in all directions have two closed leaves (the one
corresponding to the ray from zero in this direction) one of which is
attracting and the other repulsive.


Based on this torus, we can construct higher genus examples by gluing
two of these tori along a slit in the same direction (see Figure
\ref{Hopf}).  Take an embedded segment along the affine foliation in
one direction on one torus, and an other one in the same direction on
the second torus.  We cut the two surfaces along these segments and
identify the upper part of one with the lower part of the other with
the corresponding affine map.

\begin{figure}[!h]
  \includegraphics[width=0.6 \linewidth]{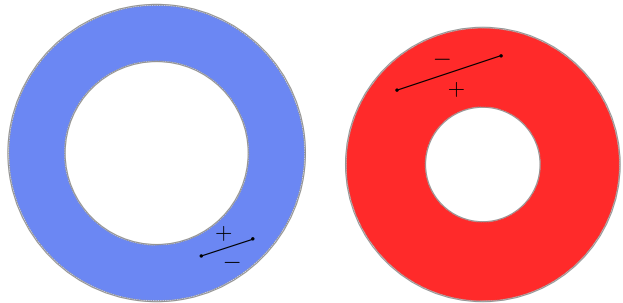}
  \caption{The franco-russian slit construction}
  \label{slit}
\end{figure}

Another construction based on the Hopf torus is given by considering a
finite covers.  Denote by $a$ the closed curve of the Hopf torus in
direction of the dilation and $b$ the closed curve turning around zero
once in the complex plane as in Figure \ref{Hopf}.  Take the $k$ index
subgroup of $\pi_1(T^2)$ generated by $a$ and $b^k$ and consider the
associated cover with the induced affine structure.  It is also a
torus which \textit{makes $k$ turns around zero}. We call it a
$k$-Hopf torus.  Similarly, a $\infty$-Hopf cylinder will be the cover
associated to the subgroup generated by $a$.

\begin{remark}
  We can also construct this structure as a slit construction along
  horizontal closed leaves of $k$ different Hopf tori.
\end{remark}

These structures have the remarkable property to be a disjoint union
of closed geodesics in different directions. Each of these geodesics
is an attractive leaf of the foliation in their direction.  This is
the property we want to keep track of, that is why we want to consider
angular sectors of these tori embedded in an affine surfaces. This is
the motivation for the following definition:

\begin{definition}
  Consider $\Sigma$ an affine surface.  Let $C_{\theta_2, \theta_1}$
  be an angular domain of a $\infty$-Hopf cylinder between
  \textit{angles} $\theta_1 \in \R_+^*$ and $\theta_1 < \theta_2$ such
  that $\theta_2 - \theta_1 = \theta$.  A \textit{cylinder} of angle
  $\theta$ is the image of a maximal affine embedding of some
  $C_{\theta_2, \theta_1}$ in $\Sigma$.

  \noindent We call $\lambda$ the \textit{affine factor} of the
  cylinder and $\theta$ its \textit{angle}.
\end{definition}

Remark the isomorphism class of an affine cylinder is completely
determined by the two numbers $\theta$ and $\lambda$.

\begin{proposition}
  \label{saddleco}

  Let $\Sigma$ be an affine surface and not a $k$-Hopf torus. Then the
  boundary of a maximal cylinder embedded in $\Sigma$ is a union of
  saddle connections.
\end{proposition}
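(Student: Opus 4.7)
My plan is to argue by contradiction: if some boundary component of a maximal cylinder contains no singularity, I extend the cylinder past this component, contradicting maximality unless $\Sigma$ itself is a $k$-Hopf torus.

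Let $C$ be a maximal embedded affine cylinder in $\Sigma$, of angular range $(\theta_1,\theta_2)$ and affine factor $\lambda\neq 1$, and let $\gamma_1$ be the component of the topological boundary of $C$ in $\Sigma$ obtained as the Hausdorff limit of the closed leaves of $C$ at angles $\theta\to\theta_1^+$. By construction $\gamma_1$ is a closed curve whose smooth pieces are leaves of $\mathcal{F}_{\theta_1}$, so it suffices to show that $\gamma_1$ carries at least one singular point (the argument at $\theta_2$ is identical).

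Suppose by contradiction that $\gamma_1$ is entirely regular. Then it is a smooth closed geodesic in direction $\theta_1$ and, being a Hausdorff limit of closed leaves of $C$, it inherits the affine holonomy factor $\lambda\neq 1$, so it is hyperbolic. The key technical input I would then invoke is the standard local model around a hyperbolic regular closed geodesic: transporting a developing chart along $\gamma_1$ with its dilation monodromy shows that a thin two-sided tubular neighborhood of $\gamma_1$ is affinely isomorphic to a collar of $\{|z|=1\}$ in $\C^*/(z\sim\lambda z)$. In particular, every direction close enough to $\theta_1$ has a closed leaf in this neighborhood, giving a little cylinder $N$ of angular range $(\theta_1-\varepsilon,\theta_1+\varepsilon)$ containing $\gamma_1$.

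For $\varepsilon$ small, $N$ meets $C$ only in the one-sided collar of angular range $(\theta_1,\theta_1+\varepsilon)$, so $C\cup N$ is an embedded affine cylinder of angular range $(\theta_1-\varepsilon,\theta_2)$, contradicting the maximality of $C$. The one way this gluing can fail is if the other side of $\gamma_1$ in $\Sigma$ is actually the $\gamma_2$-end of $C$ itself, so that $N$ wraps back into $C$; but this means $\Sigma$ is entirely swept by the closed leaves of $C$ and is therefore a $k$-Hopf torus, excluded by hypothesis. Hence $\gamma_1$ must contain a singular point, and the portion of $\gamma_1$ between two consecutive singularities is a geodesic arc of $\mathcal{F}_{\theta_1}$, i.e., a saddle connection.

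The main obstacle is the middle step: establishing that a thin \emph{two-sided} tubular neighborhood of a regular hyperbolic closed geodesic admits a Hopf-cylinder chart. On the $C$-side such a chart exists tautologically, but extending it across $\gamma_1$ requires controlling the developing map along a transversal, which is where the non-triviality of the dilation holonomy $\lambda$ is really used; and one must argue carefully that the extended chart is embedded in $\Sigma$, which is precisely the point at which the assumption that $\Sigma$ is not a $k$-Hopf torus intervenes.
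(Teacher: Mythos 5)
Your collar-extension argument in the regular case is sound, and it even fills in a step the paper leaves terse: when the boundary is an entirely regular closed geodesic, the dilation holonomy $\lambda\neq 1$ gives a two-sided Hopf collar, the cylinder extends, and maximality forces either a singular point on the boundary or a $k$-Hopf torus. The genuine gap is one step earlier, in the sentence ``By construction $\gamma_1$ is a closed curve whose smooth pieces are leaves of $\mathcal{F}_{\theta_1}$.'' This is not automatic: it amounts to asserting that the embedding $C_{\theta_1,\theta_2}\hookrightarrow\Sigma$ extends continuously to the boundary circle of the angular domain. For affine surfaces, continuous extension of affine immersions to the boundary is precisely the delicate point (it is the content of Veech's property $\mathcal{V}$, and it genuinely fails for immersed disks on surfaces containing wide cylinders); a priori the Hausdorff limit of the closed leaves as $\theta\to\theta_1^+$ could be a complicated $\mathcal{F}_{\theta_1}$-invariant closed set rather than a single closed curve, and your whole argument takes off from the nice description of $\gamma_1$.

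The paper's proof is organized exactly around this dichotomy: either the embedding extends continuously to the boundary --- in which case the image of the boundary is a closed union of geodesic arcs and saddle connections, and an argument like yours handles the regular subcase --- or it does not extend, and this wild case must be excluded. The exclusion is the technical heart and uses Proposition \ref{closedgeodesic}: if some geodesic $\gamma$ inside the cylinder, ending orthogonally at a bad boundary point, had no limit in $\Sigma$, then a disk in the cylinder tangent to the boundary at that point and centered on $\gamma$ would be a wild immersion of $\D$, and the proposition would force $\gamma$ to be a closed hyperbolic geodesic --- absurd, since $\gamma$ is an embedded arc heading to the cylinder's boundary. You need to add this step (or some other justification of the continuous extension to the boundary) before your collar argument can start.
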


\begin{proof}
  Suppose we embedded $C_{0,\theta}$ in the surface $\Sigma$.  If
  $\theta = \infty$ we would have a half-infinite cylinder in the
  surface. But this half-cylinder would have an accumulation point at
  $\infty$ in $\Sigma$ which contradicts its being embedded.

  When $\theta < \infty$, there are two reasons why $C_{0,\theta'}$
  cannot be embedded for $\theta' > \theta$
  \begin{enumerate}
  \item The embedding $C_{0,\theta} \rightarrow \Sigma$ extends
    continuously to the boundary of the cylinder in direction
    $\theta$, if the surface is not a $k$-Hopf torus the image
    contains a singular point. The image of the boundary is closed,
    and it is an union of saddle connections.
  \item The embedding does not extend to the boundary, then there is a
    geodesic $\gamma: [0,1) \rightarrow C_{0,\theta}$ starting close
    to the boundary and ending orthogonally in the $\theta$ boundary
    of $C_{0,\theta}$ such that $\gamma$ has no limit in $\Sigma$ when
    approaching $1$.  Consider an open disk in $C_{0,\theta}$ tangent
    to the boundary at the point to which $\gamma$ is ending and
    centered on $\gamma$ trajectory. Then Proposition
    \ref{closedgeodesic} implies that $\gamma$ starting from the
    center of the circle is a closed hyperbolic geodesic in $\Sigma$.
    This cannot happen since $\gamma$ is embedded in $\Sigma$
  \end{enumerate}
\end{proof}

Again a cylinder will be the union of closed leaves. The dynamics of a
geodesic entering such a cylinder is clear. If the cylinder is of
angle less than $\pi$ and the direction of the flow is not between
$\theta_1$ and $\theta_2$ modulo $2\pi$, then it will leave the
cylinder in finite time. Otherwise it will be attracted to the closest
closed leaf corresponding to its direction, and be trapped in the
cylinder.

As to enter a cylinder we have to cross its border, we see that for
cylinder of angle larger than $\pi$ every geodesic entering the
cylinder is also trapped.  These \textit{'trap'} cylinders can be
ignored when studying dynamics. We can study instead the surface with
boundaries where we remove all these cylinders. We will see in the
following section that these cylinders are also responsible for
degenerate behaviour when trying to triangulate the surface.

\begin{remark}
  A degenerate case of affine cylinder are flat cylinders. It is an
  embedding of the affine surface
  $C_a = \big\{ z \in \C \ | \ 0 < \Im(a) < a \big\} / (z \sim z+1)$.
  In this case, the length $a$ of the domain of the strip we quotient
  by $z \mapsto z+1$ will be called the \textit{modulus} of the
  cylinder.
\end{remark}

\subsection{Triangulations.}
\label{triangulations}

An efficient way to build affine surfaces is to glue the parallel
sides of a (pseudo-)polygon. A surface obtained this way enjoys the
property to have a \textit{geodesic triangulation}. It is a
triangulation whose edges are geodesic segments and whose set of
vertices is exactly the set of singular points. It is natural to
wonder if any affine surface has such triangulation from which we
could easily deduce a polygonal presentation. Remark that the question
only makes sense for surfaces of genus $g \geq 2$, for in genus $1$
there are no singular points. \\ Unfortunately, a simple example shows
that it is not to be expected in general. The double Hopf torus
constructed above cannot have a geodesic triangulation: any geodesic
issued from the singular point accumulates on a closed regular
geodesic, except for those coming from the slit. This obstruction can
be extended to any affine surface containing an affine cylinder of
angle $\geq \pi$: any geodesic entering such a cylinder never exits it
which is incompatible with the fact that a triangulated surface
deprived of its $1$-skeleton is a union of triangles. A remarkable
theorem of Veech proves that this obstruction is the only one:

\begin{theorem*}[Veech, \cite{Veech1,VeechU}]
  Let $\mathcal{A}$ be an affine structure which does not contain any
  affine cylinder of angle larger or equal to $\pi$. Then
  $\mathcal{A}$ admits a geodesic triangulation.
\end{theorem*}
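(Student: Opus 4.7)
The plan is to adapt the classical Delaunay construction for flat surfaces to the affine setting. For a regular point $p \in \Sigma \setminus \mathcal{S}$, fix a germ of affine chart sending $p$ to $0$, and let $D_p$ denote the image in $\Sigma$ of the maximal open round disk (in that chart) which admits an affine immersion into $\Sigma$ extending the chart. The chart-radius of this disk depends on the choice of scale at $p$, but the subset $D_p \subset \Sigma$ does not.

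First I would establish two properties of $D_p$ under the hypothesis that $\Sigma$ contains no affine cylinder of angle $\geq \pi$: (i) the chart-radius of $D_p$ is finite, and (ii) the closure of $D_p$ in $\Sigma$ contains at least one singular point. For (i), an infinite radius would give an immersion of an arbitrarily large round disk, which must wrap non-trivially around $\Sigma$; one then exhibits in the image an embedded angular sector of an $\infty$-Hopf cylinder of angle $\geq \pi$, contradicting the hypothesis via Proposition \ref{saddleco}. For (ii), the obstruction to the immersion of the maximal disk extending to its boundary comes in two flavors, exactly as in the proof of Proposition \ref{saddleco}: either a singular point appears (which is what we want), or a geodesic $\gamma$ from $p$ ends orthogonally at the boundary without limit, in which case Proposition \ref{closedgeodesic} forces $\gamma$ to lie on a closed hyperbolic geodesic, which in turn sits inside a maximal cylinder of angle $\geq \pi$ --- a contradiction.

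Granted (i) and (ii), the next step is to build the Delaunay decomposition. For each $p$, the finite nonempty set $V(p) \subset \mathcal{S}$ of singular points on the boundary of $D_p$, together with the straight chords inside $D_p$ joining $p$ to them, varies with $p$ and produces a Voronoi-style partition of $\Sigma$ indexed by $\mathcal{S}$. Its dual is a cell decomposition whose $0$-cells are the elements of $\mathcal{S}$, whose $1$-cells are saddle connections arising from the chords above, and whose $2$-cells are convex affine polygons inscribed in immersed disks in $\Sigma$. The last step is to triangulate each such $2$-cell by non-crossing diagonals drawn inside the ambient immersed disk; this is always possible for convex inscribed polygons, and assembles into the desired geodesic triangulation with vertex set $\mathcal{S}$.

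The main obstacle is the careful verification of (i) and (ii): one must rule out a range of degeneracies --- accumulation of the immersion on a closed leaf, overlaps of the immersed disk with itself, failure to extend even when the image is topologically a disk --- and in each case translate the pathology into the existence of a hyperbolic closed geodesic and, from there, into a maximal cylinder of angle $\geq \pi$. This is precisely where Proposition \ref{closedgeodesic} does the heavy lifting, and constitutes the \emph{serious technical difficulty} alluded to in the introduction regarding Veech's unpublished notes.
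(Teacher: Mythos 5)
Your proposal follows essentially the same route as the paper's Appendix \ref{Veech}: the key input in both is that the absence of cylinders of angle $\geq \pi$ (property $\mathcal{V}$) forces maximal immersed disks at regular points to extend to their boundaries and hence to see a singular point there (your points (i) and (ii), which are exactly what Lemmas \ref{extension1}, \ref{extension2} and Proposition \ref{closedgeodesic} deliver), followed by a Delaunay decomposition into convex polygons inscribed in maximal disks and a triangulation of each polygon. The paper describes the Delaunay cells directly as convex hulls of the singular points on the boundary of the maximal immersed disk at the points $x$ with $\nu(x) \geq 3$, rather than as the dual of a Voronoi partition, but this is the same construction, and like yours its account is a sketch that defers the verification of the embedding and covering properties of the cells to \cite{Veech1,VeechU}.
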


The exact theorem generalizes a classical construction known as
\textit{Delaunay partitions} to a more general class of affine
surfaces. We review this construction and more of the material
contained in \cite{Veech1,VeechU} in Appendix \ref{Veech}.

\section{Examples.}
\label{examples}

\subsection{The two chambers surface.}

By gluing the sides of same color of Figure \ref{GMP} below, we get a
genus $2$ affine surface with a unique singular point of angle
$6\pi$. For completely random reasons, we call it the \GMP surface.

\begin{figure}[!h]
  \centering
  \includegraphics[scale=0.3]{GMP.pdf}
  \caption{The \GMP \ surface.}
  \label{GMP}
\end{figure}

We believe that this example is particularly interesting because it is
a first non trivial example for which we can describe the directional
foliation in every direction.

\begin{proposition}

  Let $\mathcal{F}_{\theta}$ be the directional foliation oriented by
  $\theta$ on the \GMP \ example.

  \begin{enumerate}
  \item If $\theta = \pm \frac{\pi}{2}$, then the foliation is
    completely periodic; the surface decomposes into two euclidean
    cylinders.

  \item If $\theta =\arctan(n)$ or $\arctan(n+\frac{1}{2})$ for
    $n \in \mathbb{Z}$ and $\theta \neq \pm \frac{\pi}{2}$, the
    foliation accumulates on a closed saddle connection.

  \item For any other $\theta$, the foliation accumulates on a
    hyperbolic closed leave.
  \end{enumerate}

\end{proposition}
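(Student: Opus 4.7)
The plan is to reduce the classification to the dynamics of a first-return map on a horizontal transversal, after exploiting a parabolic element of the Veech group that brings an arbitrary direction into a fundamental domain.

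For statement (1), I would read off the polygonal presentation directly: the vertical identifications on the \GMP\ polygon are pure translations, so vertical leaves in each chamber close up, and the surface decomposes into two euclidean cylinders (one per chamber). This settles $\theta = \pm \pi/2$.

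Next, using these two flat vertical cylinders, I would check from the explicit side lengths that their moduli are commensurable; a suitable power of the simultaneous Dehn twist then yields an affine self-map of $\Sigma$ whose derivative is a parabolic matrix $\bigl(\begin{smallmatrix} 1 & c \\ 0 & 1 \end{smallmatrix}\bigr)$ for some explicit $c>0$. This element acts on directions by the shift $\cot \theta \mapsto \cot \theta - c$. Since the Veech group preserves the topological type of the directional foliation (periodic/saddle connection/hyperbolic attractor), it is enough to classify $\F_\theta$ for $\theta$ in a single fundamental domain for this $\mathbb{Z}$-action on slopes; the corresponding statement for an arbitrary $\theta$ follows by applying an integer power of the parabolic.

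Within such a fundamental domain, I would cut along the horizontal segment separating the two chambers and compute the first-return map of $\F_\theta$ on this transversal. The two-chamber combinatorics forces the return map to be an affine interval exchange on two intervals, equivalently an orientation-preserving affine homeomorphism of the circle, whose slope parameters are explicit rational functions of $\tan \theta$. For such an affine circle map the Poincar\'e rotation number is well defined and yields the trichotomy: if it is irrational the affine nature forces a unique attracting and a unique repelling periodic orbit, which lifts to a hyperbolic closed leaf of $\F_\theta$, giving case (3); if it is rational, the map has a parabolic periodic orbit whose lift is a closed saddle connection on $\Sigma$, giving case (2); and the fully periodic sub-case only occurs when the affine factors degenerate to $1$, which in our setting can only happen in the vertical direction already handled in (1).

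The main obstacle will be the explicit bookkeeping that matches the set of rational-rotation-number parameters to the announced list $\{\arctan n, \arctan(n + \tfrac12) : n \in \mathbb{Z}\}$. Concretely, one must pin down the value of the shift constant $c$ produced by the parabolic element and verify that the orbits of $\theta = 0$ and $\theta = \arctan(\tfrac12)$ under the slope shift $\cot\theta \mapsto \cot\theta - c$ together exhaust precisely the directions in which the two-interval AIET has rational rotation number. Everything else should then follow from the Veech-group reduction above.
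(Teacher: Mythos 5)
Your overall strategy (use the flat vertical cylinders to produce a parabolic in the Veech group, reduce to a fundamental domain of slopes, then analyse the first-return map on a transversal) is sensible and in fact more systematic than the paper's own proof, which merely exhibits the attracting closed leaf in the sample direction $\theta=\arctan(\tfrac14)$, computes that the return map on a small transverse segment is a dilation of factor $\tfrac12$, and declares the other cases similar. However, your key dynamical step is backwards, and this is a genuine error, not a detail. By Poincar\'e's classification, an orientation-preserving circle homeomorphism has a periodic orbit \emph{if and only if} its rotation number is rational; if the rotation number is irrational there are \emph{no} periodic orbits at all. So your claim that ``if it is irrational the affine nature forces a unique attracting and a unique repelling periodic orbit'' cannot be repaired: irrational rotation number would instead produce a minimal foliation or (for affine maps, after Denjoy--Levitt-type constructions) an exceptional Cantor attractor --- exactly the behaviours that the proposition asserts do \emph{not} occur on the \GMP\ surface. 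The hyperbolic closed leaves of case (3) correspond to hyperbolic periodic orbits of the return map, hence to \emph{rational} rotation number, and the actual content one must establish is that the rotation number is locked at a rational value, with hyperbolic periodic orbits, for every $\theta$ outside the countable list $\{\arctan n,\ \arctan(n+\tfrac12)\}$ --- i.e.\ that the mode-locking intervals of your one-parameter family of affine circle maps cover the complement of that countable set, with the listed directions being the transition parameters carrying the closed saddle connections. Your write-up supplies no argument for this covering statement, and as written it would assign the generic directions to the wrong case of the trichotomy.

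Two smaller points. First, the assertion that the return map is an affine exchange of \emph{two} intervals needs justification: a two-interval exchange suspends to a torus, whereas $\Sigma$ has genus $2$ with a cone point of angle $6\pi$, so the return map to a full transversal has more discontinuities; you would need to explain why the two-chamber combinatorics lets you pass to a sub-transversal on which the dynamics is genuinely a circle map. Second, the parabolic coming from the two vertical cylinders of statement (1) fixes the vertical direction and so is lower triangular, acting on slopes by $\tan\theta \mapsto \tan\theta + c$ (the paper later shows $c=1$ works); this is consistent with the invariance of the exceptional set under $\tan\theta\mapsto\tan\theta+1$, but you should check the commensurability of the two vertical moduli rather than assume it, since otherwise no power of the twist is globally affine. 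If you fix the rotation-number logic and carry out the explicit locking computation in a fundamental domain, the approach would give a complete and more rigorous proof than the one in the paper.
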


\begin{proof}
  Consider the case when $\theta = \arctan(\frac{1}{4})$. Here the
  segment linking the middle of the yellow sides projects to a closed
  leave of the directional foliation of angle
  $\theta = \arctan(\frac{1}{4})$. Take a little segment transverse to
  this leaf, the first return map is a dilation of factor
  $\frac{1}{2}$ (resp $2$). This closed leaf is therefore attractive
  in the sense that every leaf passing close by winds around and
  accumulate on it.

  The other cases are similar, the reader can convince himself by
  looking for the two closed leafs in the given direction and remark
  that one will be attractive and the other repulsive.
\end{proof}

\subsection{Affine interval exchange transformations.}
\label{Cantor}
We mention in this subsection a construction of Camelier and Gutierrez
(\cite{CamelierGutierrez}), improved by Bressaud, Hubert and Maas
(\cite{BressaudHubertMaass}), and generalised by Marmi, Moussa and
Yoccoz (\cite{MarmiMoussaYoccoz}).

\vspace{2mm}

An \textit{affine interval exchange} is a piecewise affine bijective
map from $[0,1]$ in itself. It can be thought of as a generalization
of either standard interval exchanges or of piecewise affine
homeomorphisms of the circle. To such an AIET (Affine Interval
Exchange Transformation), we can associate an affine surface which is
its \textit{suspension}. It consists in taking a rectangle,
identifying two vertical parallel sides in the standard way, and
identifying the two horizontal according to the AIET, Figure
\ref{DDuryev_cylinders}.

The dynamics of the vertical foliation of such an affine surface is
exactly the same as the affine interval exchange we started from, for
the orbits of the latter are in correspondence with the leaves of the
foliation, singular leaves corresponding to discontinuity points of
the AIET.

\vspace{2mm}

The construction mentioned above brings to light a surprising
behaviour for certain affine interval exchanges.

\begin{theorem*}[Marmi-Moussa-Yoccoz, \cite{MarmiMoussaYoccoz}]
  For all combinatorics of genus at least $2$, there exists a uniquely
  ergodic affine interval exchange whose invariant measure is
  supported by a Cantor set in $[0,1]$.
\end{theorem*}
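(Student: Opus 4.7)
The plan is to build the desired AIET by deforming a self-similar standard interval exchange transformation via a Denjoy-like insertion, using the stable eigendirections of the Rauzy-Veech renormalization as the source of affine distortion.

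First I would fix the combinatorial data and use Rauzy-Veech theory to produce a standard IET $T_0$ on $[0,1]$ of that type which is self-similar: there is an iterate $R^n$ of the renormalization and a positive integer matrix $A$ such that $R^n(T_0)$ is affinely conjugate to $T_0$, with length vector equal to the Perron-Frobenius eigenvector of $A$ for the top eigenvalue $\theta>1$. Such a $T_0$ is uniquely ergodic with Lebesgue as the only invariant probability measure. Because the combinatorics lives in genus $g\ge 2$, the Rauzy-Veech cocycle is essentially symplectic and has non-trivial Lyapunov exponents, so $A$ admits an eigenvector $w$ with an eigenvalue $\mu$ of modulus strictly between $0$ and $1$. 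Using $w$ one defines a piecewise affine bijection $T_1$ of $[0,1]$ with the same combinatorics and the same discontinuities as $T_0$, whose vector of log-slopes on the exchanged subintervals is prescribed by the coordinates of $w$. The eigenvector property translates exactly into the statement that $T_1$ is itself self-similar under $R^n$, its log-slope vector being contracted by the factor $\mu$.

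Next I would exploit this self-similarity to produce wandering intervals, following the Camelier-Gutierrez / Bressaud-Hubert-Maass scheme. The contraction of slopes along renormalization towers forces the products of derivatives along certain orbits to decay geometrically, which is exactly the classical Denjoy mechanism. One selects a suitable orbit of $T_0$ and replaces each of its points by an interval whose length is chosen to match this geometric decay; the summability of these lengths ensures that the insertion fits inside $[0,1]$. The resulting map is the desired AIET, and the complement of the union of inserted wandering intervals is a Cantor set $K\subset [0,1]$ on which the dynamics is semi-conjugate to $T_0$.

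Unique ergodicity is the main technical hurdle and the bulk of the Marmi-Moussa-Yoccoz paper. Existence of an invariant measure on $K$ is automatic by pushing forward Lebesgue through the semi-conjugacy. To exclude any other invariant probability measure, one must show that no invariant measure can charge the wandering intervals and that the restriction to $K$ is forced to equal that pushforward. The key ingredient is a uniform control of the distortion of Birkhoff sums of $\log|T'|$ along the Rauzy-Veech renormalization tower; the stable Lyapunov exponent $|\mu|<1$ makes these sums bounded along a large set of orbits, and the heart of the proof is to turn this bound into the exclusion of any alternative invariant measure, which I expect to be the most delicate step to carry out in detail.
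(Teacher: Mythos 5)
This theorem is quoted in the paper from \cite{MarmiMoussaYoccoz}; the authors explicitly decline to give any details of the construction, so there is no internal proof to compare against and I am judging your sketch on its own terms. Your overall architecture is the correct one and does match the Camelier--Gutierrez / Bressaud--Hubert--Maass / Marmi--Moussa--Yoccoz line: start from a self-similar IET $T_0$ obtained from a periodic Rauzy--Veech loop with matrix $A$ and Perron eigenvalue $\theta_1$, prescribe the log-slope vector $w$ by a second eigenvector of $A$, produce a wandering interval by a Denjoy-type mechanism, and deduce that the unique invariant measure lives on the Cantor minimal set via the semi-conjugacy onto $T_0$.

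However, there is a genuine error at the heart of your sketch: you take $w$ in the \emph{stable} direction, i.e.\ an eigenvector with eigenvalue $\mu$ of modulus in $(0,1)$. For such a $w$ the Birkhoff sums $S_nw$ of the piecewise-constant log-slope function along $T_0$-orbits are \emph{bounded} (this is precisely the content of the Marmi--Moussa--Yoccoz results on the cohomological equation), so the products of derivatives $e^{S_nw}$ are bounded away from $0$ and $\infty$, the series $\sum_n e^{S_nw}$ diverges, no wandering interval can be inserted, and the resulting AIET is in fact topologically conjugate to $T_0$ with invariant measure equivalent to Lebesgue --- the opposite of what is wanted. The construction requires an eigenvalue $\theta_2$ with $1<\theta_2<\theta_1$ (equivalently, a positive second Lyapunov exponent for the Zorich cocycle along the loop); since the corresponding eigenvector is orthogonal to the Perron data, its Birkhoff sums change sign and, along a well-chosen orbit, decay like $-c\,n^{\alpha}$ with $\alpha=\log\theta_2/\log\theta_1\in(0,1)$. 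This gives summability of $e^{S_nw}$ (stretched-exponential, not geometric as you claim) and hence the wandering interval. Two further points: you should justify that every Rauzy class of genus $\ge 2$ admits a loop with $\theta_2>1$ (this is where positivity of the second exponent enters and where the genus hypothesis is really used), and you have the difficulty inverted --- unique ergodicity is the soft part (any invariant measure cannot charge the wandering set, pushes forward to Lebesgue under the semi-conjugacy, and the semi-conjugacy is injective off countably many points), whereas the Birkhoff-sum estimates producing the wandering interval are the technical core.
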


The implication of the theorem for the affine surfaces associated is
that there are some leaves of the vertical foliation whose closure in
the surface is union of leaves which intersects every transverse curve
along a Cantor set. This in sharp contrast with both standard interval
exchanges and piecewise affine homeomorphisms of the circle.

The construction is quite involved and we will not give any
detail. Nonetheless it is worth noticing that in the example we have
presented before, we have seen no trace of such a behaviour: the \GMP
surface has a very simple dynamics in every direction.

\subsection{The \DbDu surface}

We build a first surface whose dynamics is \textit{a priori}
non-trivial. Choose $a,b$ two positive real numbers, we consider the
AIET associated to the permutation $(1,2)(3,4)$, and with top and
bottom lengths $a,b,b,a$.  Now take the suspension of this AIET with a
rectangle of height $1$, it defines an affine surface which two
singularities of angle $4\pi$ and genus $2$, see Figure
\ref{DDuryev_suspension}. We call it the \textit{\DbDu surface} and
denote it by $\DD_{a,b}$. Notice that the surface contains at hand
several affine cylinders. We represent some of them on the following
Figure \ref{DDuryev_cylinders}. These cylinders can overlap, and some
zones can a priori be without cylinder coverage.

\begin{figure}[h]
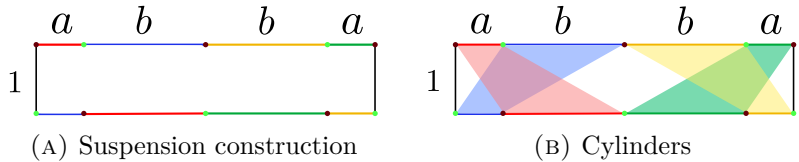

  \begin{subfigure}[c]{.4\linewidth}
    \centering
    \includegraphics[width=.9\linewidth]{double_duryev.pdf}
    \subcaption{Suspension construction}
    \label{DDuryev_suspension}
  \end{subfigure}
  \begin{subfigure}[c]{.4\linewidth}
    \centering
    \includegraphics[width=.9\linewidth]{double_duryev_cylinders.pdf}
    \subcaption{Cylinders}
    \label{DDuryev_cylinders}
  \end{subfigure}
  \caption{The \DbDu surface with a horizontal cylinder}
\end{figure}

We give an alternative representation of the surface which makes a
vertical flat cylinder decomposition appear.  To do so we cut out the
left part of the surface of width $a$ along a vertical line. We now
rescale it by a factor $\frac b a$ and reglue it on the top $b$
interval. Reproduce the same surgery with the right part of the
surface and the new surface is the one drawn on Figure
\ref{DDuryev_vertical}

\begin{figure}[!h]
  \includegraphics[scale=0.35]{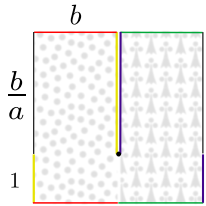}
  \caption{An alternative representation of the \DbDu surface}
  \label{DDuryev_vertical}
\end{figure}

\section{Veech groups.}
\label{FVgroups}

Given a matrix $M \in \mathrm{Gl}_2^+(\R)$ and an affine structure
$\mathcal{A}$ on $\Sigma$, there is a way to create a new affine
structure by replacing the atlas $(U_i, \varphi_i)_{i\in I}$ by
$(U_i, M.\varphi_i)_{i\in I}$. This new affine structure is denoted by
$M \cdot \mathcal{A}$. A way to put our hands on this operation is to
describe it when $\mathcal{A}$ is given by gluing sides of a polygon
$p$. If one sees $P$ embedded in the complex plane
$\C \simeq \mathbb{R}^2$, $M \cdot \mathcal{A}$ is the structure one
gets after gluing sides of the polygon $M \cdot P$ along the same
pattern.

We have defined this way an action of $\mathrm{Gl}_2^+(\R)$ on the set
of affine surfaces which factors through $\SLtR$, since the action of
dilation is obviously trivial. If $\mathcal{A}$ is an affine structure
on $\Sigma$, we introduce its Veech group $\FV(\mathcal{A})$ which is
the stabilizer in $\SLtR$ of $\mathcal{A}$, namely

$$ \FV(\mathcal{A}) = \{ M \in \SLtR \ | \ M \cdot \mathcal{A} = \mathcal{A} \} $$ 

The Veech group is the set of real affine symmetries of the considered
affine surface. It is the direct generalisation of the Veech group in
the case of translation surfaces (see \cite{HubertSchmidt} for a nice
introduction to the subject). For example, if $\mathcal{T}$ is a Hopf
torus, $\FV(\mathcal{T}) = \SLtR$. It is a consequence of the fact
that $\mathcal{T} = \C^* / (z \sim \lambda z)$ for a certain
$\lambda > 1$, and that the action of $\SLtR$ commutes to
$z \mapsto \lambda z$. This fact is in sharp contrast with the case of
translation surfaces where the Veech group is known to always be
discrete.

\subsection{The Veech group of the \GMP surface.}

We carry on the particular analysis of the examples introduced in the
last section, beginning with the \GMP surface. Our exhaustive
understanding of the dynamics of every directional foliation will
enable us to describe its Veech group completely.

\begin{proposition}

  The Veech group of the \GMP surface is the group generated by the
  two following matrices
  $$
  \begin{pmatrix}
    -1 & 0 \\
    0 & -1
  \end{pmatrix} \
  \text{and} \
  \begin{pmatrix}
    1 & 0 \\
    1 & 1
  \end{pmatrix}
  $$

\end{proposition}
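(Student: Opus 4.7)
The plan is to verify that the two listed matrices lie in $\FV$ of the \GMP surface, and then to argue that no other elements of $\SLtR$ can occur, exploiting the direction-by-direction description in the preceding Proposition. The first generator $-I$ arises from the central involution of the polygon in Figure~\ref{GMP}: rotation by $\pi$ about the centre of the polygon exchanges the matched sides, fixes the unique singular point, and descends to an affine diffeomorphism of $\Sigma$ whose linear part is $-I$. For the second generator, part~(1) of the Proposition identifies the vertical direction with a decomposition of $\Sigma$ into two flat Euclidean cylinders; a direct computation from the side lengths in Figure~\ref{GMP} shows that the moduli of these two cylinders are commensurable with smallest common value $1$, so that the simultaneous Euclidean Dehn twist along both of them is realised by precisely the shear $\begin{pmatrix}1 & 0\\ 1 & 1\end{pmatrix}$.

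For the reverse inclusion, let $M = \begin{pmatrix} a & b\\ c & d\end{pmatrix} \in \FV$. Since $M$ is realised by an affine diffeomorphism, the three classes of directions listed in the Proposition, being determined purely by dynamical invariants of the foliation, must be permuted by the linear action of $M$ on $\R^{2}$. Class~(1) consists only of the vertical direction, so $M$ must fix it, forcing $b = 0$ and hence $M = \begin{pmatrix} a & 0\\ c & d\end{pmatrix}$ with $ad = 1$. On non-vertical slopes $M$ acts by $s \mapsto c/a + d^{2} s$; by part~(2) the slopes of class~(2) form exactly the subset $\frac{1}{2}\Z \subset \R$, and preservation of this discrete set forces $d^{2} = 1$ (hence $a = d = \pm 1$) and $c \in \frac{1}{2}\Z$.

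Composing with $-I$ if necessary reduces to the case $a = d = 1$, and it remains to rule out half-integer values of $c$. For this, any such element is realised geometrically as a simultaneous affine Dehn twist of the two vertical Euclidean cylinders, so its shearing parameter must satisfy $c \cdot m_{i} \in \Z$ for the two cylinder moduli $m_{1}, m_{2}$; since both were computed to equal $1$ in the first step, only integer values of $c$ survive, and $\FV$ is exactly the group generated by $-I$ and $\begin{pmatrix}1 & 0\\ 1 & 1\end{pmatrix}$.

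The main technical obstacle is the modulus computation used twice in the argument, first to exhibit the parabolic generator and then to exclude half-integer shears. It is the one step that genuinely depends on the explicit side lengths of the polygonal presentation and is not formally implied by the dynamical classification of directions; everything else reduces to elementary manipulations in $\SLtR$ together with the action of affine diffeomorphisms on directions.
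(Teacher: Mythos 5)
Your proposal is correct and follows essentially the same architecture as the paper's proof: exhibit the two generators, use preservation of the unique completely periodic (vertical) direction to force lower-triangular form, pin down the diagonal, and then pin down the off-diagonal entry to $\Z$. Two of your sub-arguments differ from the paper's in a way worth noting. To show the diagonal is $\pm 1$, the paper observes that the two flat vertical cylinders have modulus $1$ and that a matrix $\begin{psmallmatrix}\lambda&0\\ *&\lambda^{-1}\end{psmallmatrix}$ would rescale these moduli; you instead use that the class-(2) slopes form the arithmetic progression $\tfrac{1}{2}\Z$, which an affine bijection $s\mapsto c/a+d^2s$ can only preserve if $d^2=1$ --- a nice alternative that also delivers $c\in\tfrac12\Z$ for free. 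To exclude the remaining non-integer shears, the paper invokes discreteness and invariance of the set of saddle-connection vectors, while you use the integrality constraint $c\cdot m_i\in\Z$ coming from the cylinder decomposition; both are the standard translation-surface argument transplanted here, and both are stated at the same (somewhat compressed) level of detail, e.g.\ you implicitly use that the affine diffeomorphism preserves, or after squaring preserves, each of the two vertical cylinders. Finally, the paper establishes that $\begin{psmallmatrix}1&0\\1&1\end{psmallmatrix}$ is in the Veech group by an explicit cut-and-paste on the polygon, which is the concrete form of your simultaneous Dehn twist on the two modulus-$1$ cylinders.
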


\begin{proof}

  The are only two directions on the \GMP surface which are completely
  periodic which are $\frac{\pi}{2}$ and $-\frac{\pi}{2}$. Any element
  of the Veech group must preserve this set of directions, this is why
  it must lie in the set of lower triangular matrices. The rotation of
  angle $\pi$ belongs to the Veech group since both the polygon that
  defines the \GMP surface and the identification are invariant under
  this rotation. Up to multiplying by the latter, we can always assume
  that an element of the Veech group fixes the direction
  $\frac{\pi}{2}$.

  \vspace{2mm} We prove now that if a matrix of the form
  $\begin{pmatrix}
    \lambda & 0 \\
    * & -\lambda^{-1}
  \end{pmatrix} $ belongs to the Veech group with $\lambda > 0$, then
  $\lambda=1$. The \GMP surface contains two flat cylinders of modulus
  $1$. The image of these two cylinders by the action of such a matrix
  are two cylinders of modulus $1$.  So for $\begin{pmatrix}
    \lambda & 0 \\
    * & -\lambda^{-1}
  \end{pmatrix} $ to belong to the Veech of the \GMP surface,
  $\lambda$ must equal $1$.

  \vspace{2mm} Now remark that $\begin{pmatrix}
    1 & 0 \\
    1 & 1
  \end{pmatrix}$ is in the Veech group.  A simple cut and paste
  operation proves this fact, see Figure \ref{cutpaste} below.

\begin{figure}[h]
  \centering
  \includegraphics[scale=0.3]{GMP2.pdf}
  \caption{}
  \label{cutpaste}
\end{figure}

To complete the proof of the theorem, remark that the set of vectors
of saddle connection must be preserved. This implies that $t=1$ is the
smallest positive number such that
$\begin{psmallmatrix}1&0\\ t&1\end{psmallmatrix}$ belongs to the Veech
group of $\Sigma$.
\end{proof}

\subsection{The Veech group of the \DbDu surface.}

Let us describe some elements of the Veech group of the \DbDu surface.
First remark that when we act by the matrix
$$
\left(\begin{array}{cc}
        1 & t \\
        0 & 1 \\
      \end{array}
    \right)$$ on a vertical cylinder of height $1$ and width $t$, we
    can rearrange the surface and end up back to the same cylinder. It
    is exactly a Dehn twist on its core curve.  This works also for
    any cylinder of modulus $t$. Hence if we have a surface which we
    can decompose in cylinders of same modulus $t$ in the horizontal
    direction, the matrix above is in its Veech group.

    As we remarked when introducing the \DbDu surface $\DD_{a,b}$,
    they decompose into one cylinder of modulus $2(a+b)$ in horizontal
    direction (Figure \ref{DDuryev_cylinders}) and into two cylinders
    of modulus $\frac {1+b/a} b = \frac 1 a + \frac 1 b$ in vertical
    direction (Figure \ref{DDuryev_vertical}).

    As a consequence,
$$
\left(\begin{array}{cc}
        1 & 2(a + b) \\
        0 & 1 \\
      \end{array}
    \right), \; \left(\begin{array}{cc}
                        1 & 0 \\
                        \frac 1 a + \frac 1 b & 1 \\
                      \end{array}
                    \right) \in \FV(\DD_{a,b})
$$

Remark that these two matrices never generate a lattice in
$SL_2(\mathbb R)$ since it would imply that
$2(a+b)(\frac 1 a + \frac 1 b) \leq 4$ which never happens.

\subsection{Hopf surfaces}
\label{hopfsurfaces}
We present in this subsection a general construction of affine
surfaces whose Veech group is conjugate to

$$ \big\{   \begin{pmatrix}
  \lambda & t \\
  0 & \lambda^{-1}
\end{pmatrix} \ | \ t \in \R \ and \ \lambda \in \mathbb{R}_+^*
\big\} $$ and we prove that these are the only surfaces whose Veech
group is not discrete. In particular this construction includes the
Hopf torus and their derivatives introduced in Section
\ref{cylinders}.

\begin{definition}

  A \textit{ribbon graph} is a finite graph with a cyclic ordering of
  its semi-edges at its vertices.

\end{definition}

We can think of a ribbon graph as an embedding of a given graph in a
surface, the manifold structure giving the ordering a the
vertices. The structure of a tubular neighbourhood of the embedding of
the graph completely determines the ribbon graph. \\ Given a ribbon
graph, we can make the following construction: along the boundary
components of the infinitesimal thickening of the ribbon graph, we can
glue cylinders of angle $k\pi$ respecting the orientation of the
foliation to get an affine surface. We have to be careful to the
factors of the cylinders we glue if we want the singular points to be
Euclidean; we give an example from which it will be easy to deduce the
general pattern.

\begin{figure}[!h]
  \centering
  \includegraphics[width=\linewidth]{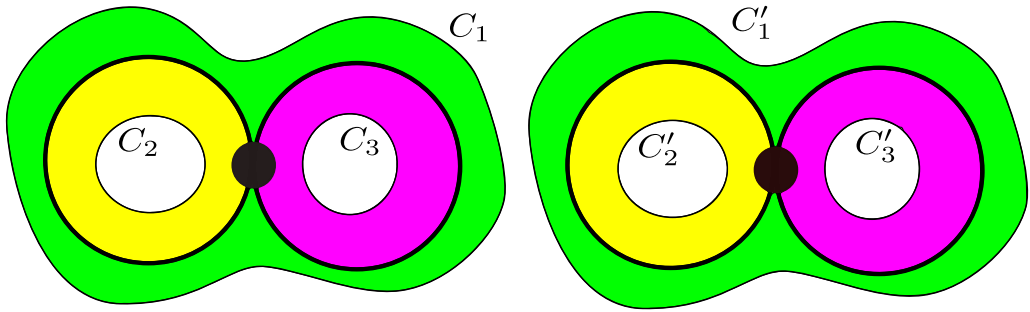}
  \caption{A ribbon graph with two vertices.}
  \label{cutting}
\end{figure}

\noindent that we turn into a genus two surface by gluing three
cylinders $D_i$ to the boundary components, each joining $C_i$ to
$C_i'$ for $i=1,2,3$.

\begin{figure}[!h]
  \centering
  \includegraphics[scale=0.5]{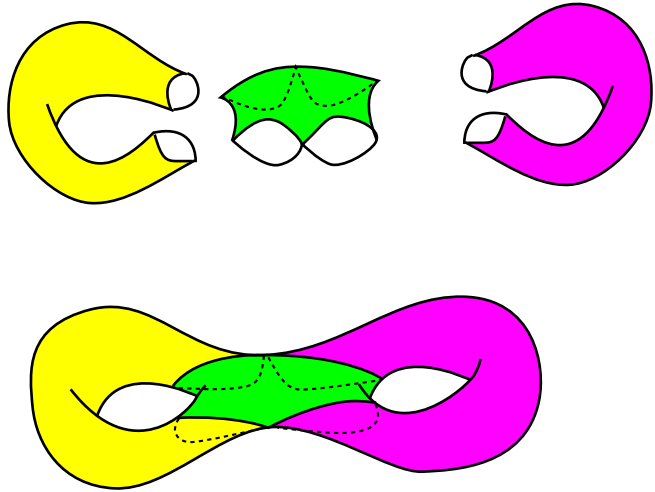}
  \caption{A cylinder decomposition of the surface of genus $2$.}
  \label{cutting}
\end{figure}

Let $\lambda_i$ be the affine factor of $D_i$. For the angular points
to be euclidean, it is necessary that the product of the factor of the
cylinders adjacent at a singular point is trivial. In our case we get
for an appropriate choice of orientation:

$$ \lambda_1 = \lambda_2 \lambda_3$$

This constraint straightforwardly generalizes to the general case and
is the only obstruction to complete the construction. We call an
affine surface obtained by this construction a \textit{Hopf surface},
because it generalizes the variations on the Hopf torus explained in
Section \ref{cylinders}.

We will see in the next section that this example corresponds exactly
to the case when the Veech group is not discrete and is not $\SL_2\R$

\subsection{Veech group dichotomy.}

First we deal with the case of genus $1$ with the following lemma,

\begin{lemma}
  \label{khopf}
  An affine torus is the exponential of some flat torus
  $\C/ \alpha \Z \oplus \left( \beta + 2ik\pi \right) \Z$ where $\alpha, \beta \in \R^+$
    and $k \in \mathbb N$.
  Moreover its Veech group is $\SLtR$
\end{lemma}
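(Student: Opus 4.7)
\medskip

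\noindent \textbf{Structure.} The plan is to first prove the structural statement by pulling the developing map through the exponential map $\exp\colon\C\to\C^*$. An affine surface of genus $1$ has no cone point by the discrete Gauss--Bonnet relation, so $\Sigma$ is a closed $(\Afr,\C)$-manifold with $\pi_1(\Sigma)\simeq\Z^2$; the holonomy $\rho\colon\Z^2\to\Afr$ therefore has abelian image, and every abelian subgroup of $\Afr=\R^*_+\ltimes\C$ is conjugate in $\Afr$ either to a group of translations (yielding an ordinary flat torus) or to a subgroup of $\{z\mapsto\lambda z : \lambda\in\R^*_+\}$ fixing the origin. I focus on the latter case, which is the content of the lemma. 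I first claim that $\dev\colon\tilde\Sigma\to\C$ avoids $0$: its image is an open subset of $\C$ invariant under a nontrivial real scaling, so if it contained $0$ it would be all of $\C$; compactness of $\Sigma$ would then promote $\dev$ to a covering onto $\C$, hence to a diffeomorphism, and $\Sigma$ would be $\C/\rho(\pi_1)$, contradicting freeness of the deck action since the dilations fix $0$. Hence $\dev(\tilde\Sigma)\subset\C^*$, and lifting through $\exp\colon\C\to\C^*$ produces a local diffeomorphism $\widetilde{\dev}\colon\tilde\Sigma\to\C$ with $\exp\circ\widetilde{\dev}=\dev$, now equivariant under translations by $\log\rho(\gamma)$, which live in $\R+2i\pi\Z$. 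The same compactness argument turns $\widetilde{\dev}$ into a global diffeomorphism, and the deck action becomes translation by a rank-$2$ lattice $L\subset\R+2i\pi\Z$; a basis change with one real generator brings $L$ to the normal form $\alpha\Z\oplus(\beta+2ik\pi)\Z$, and by construction $\Sigma=\exp(\C/L)$.

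\textbf{Veech group.} Let $M\in\SLtR$. Viewed as an $\R$-linear endomorphism of $\C\simeq\R^2$, it commutes with every real-scalar multiplication, and in particular with the holonomy. The composition $M\circ\exp\colon\C\to\C^*$ preserves the winding number around $0$ (because $\SLtR$ is connected and acts trivially on $\pi_1(\C^*)=\Z$), so it lifts through $\exp$ to a diffeomorphism $\widetilde M\colon\C\to\C$. The identity $M(e^l w)=e^l Mw$ for $l\in\R$, combined with the winding-number count applied to the imaginary generator $2i\pi$, gives $\widetilde M(z+l)=\widetilde M(z)+l$ for every $l\in L$. Therefore $\widetilde M$ descends to a self-diffeomorphism of $\C/L=\Sigma$ whose linear part in the affine charts is $M$, showing $\SLtR\subset\FV(\Sigma)$; the reverse inclusion is built into the definition.

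\textbf{Main obstacle.} The delicate point is ruling out $0\in\dev(\tilde\Sigma)$: my argument reduces this to $\dev$ being a covering onto its image, an ``Ehresmann completeness'' property of closed affine surfaces that requires a careful use of compactness --- or one may sidestep this altogether by invoking the classical Kuiper--Nagano--Yagi classification of affine structures on $T^2$, which directly yields the dichotomy between translation and radiant (i.e.\ $\exp$-type) structures.
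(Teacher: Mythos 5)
Your proof follows essentially the same route as the paper: both realize the affine torus as the exponential of a flat torus by taking a logarithm of the developing map (the paper packages this as the holomorphic $1$-form $df/f$, shown to be holomorphic via the residue theorem, whereas you lift $\dev$ through $\exp$ directly), and both deduce the Veech group statement from the fact that $\SLtR$ commutes with real scalar multiplication. The one point you flag as delicate --- that $\dev$ avoids the fixed point $0$ --- is simply asserted without proof in the paper (``we can choose a developing map $f$ which avoids zero''), so your extra care there, including the fallback to the classical Kuiper--Nagano--Yagi classification, is if anything an improvement.
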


\begin{proof}
  Consider an affine torus. Its developing map goes from $\mathbb C$
  to $\mathbb C$ and its holonomy is commutative. Hence its holonomy
  is generated either by two translation or two affine maps with the
  same fixed point (which we assume to be zero).  The former case is
  to be excluded since we are only considering strictly affine
  surface.
  
  In the second case, we can choose a developing map $f$ which avoids
  zero, and associate to it the $1$-form
  $\operatorname d \log f = \frac {df} {f}$.  As the surface has no
  singularity, the derivative of $f$ is never zero, and the $1$-form
  is invariant with respect to the holonomy.  Thus this \textit{a
    priori} meromorphic form is defined on the torus, has no zeroes
  and by residue formula no poles, therefore it is holomorphic.

  In conclusion, the logarithm form gives a flat structure on the
  torus that is isomorphic to
  $\C/ \alpha \Z \oplus \tau \Z$ where
  $\alpha \in \R^+$ and $\tau \in \C^*$.  The exponential of
  this flat torus is the initial affine structure, thus
  $e^\tau \in \R^*_+$ and $\Im(\tau) \equiv 0 \mod{2\pi}$.

  Any matrix of $\SLtR$ commutes with the scalar multiplications, thus
  the Veech group of such a surface is the whole $\SLtR$.
\end{proof}

This structure is like taking one $\alpha$-Hopf torus which we slit
at one horizontal closed curve and glue $k$ copies of it. When we glue
back the $k$-th copy to the first one, we apply a $\beta$ dilation.\\

Now that we have set aside the peculiar case of genus $1$, we prove a
classification theorem on affine surfaces of higher genus depending on
the type of their Veech group.

\begin{theorem}
  \label{classification}
  Let $\Sigma$ be an affine surface of genus $\geq 2$.  There are two
  possible cases :

  \begin{enumerate}
  \item $\Sigma$ is a Hopf surface and $\FVS$ is the subgroup of upper triangular elements of $\SLtR$,
    $$
    \big\{
    \begin{pmatrix}
      \lambda & * \\
      0 & \lambda^{-1}
    \end{pmatrix}
    | \lambda \in \R^* \big\}
    $$
  \item $\FVS$ is discrete.
  \end{enumerate}
\end{theorem}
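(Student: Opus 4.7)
The plan is to prove the dichotomy by establishing two implications: if $\Sigma$ is a Hopf surface then $\FVS$ equals the upper-triangular Borel subgroup $B \subset \SLtR$, and conversely if $\FVS$ is non-discrete then $\Sigma$ must be a Hopf surface. For the first direction, I use the structure of Hopf surfaces from Section \ref{hopfsurfaces}: all saddle connections of $\Sigma$ arise as boundaries of the glued cylinders and share a common horizontal direction, so any $M \in \FVS$ must preserve the horizontal direction and hence lie in $B$. For the reverse inclusion $B \subseteq \FVS$, given $M \in B$ I construct an affine diffeomorphism $\phi_M : \Sigma \to \Sigma$ with linear part $M$: on each Hopf cylinder modelled on $\C^*/(z \sim \mu z)$, the map $z \mapsto Mz$ descends to the quotient since $M$ commutes with the central scalar dilation $\mu I$, producing an affine self-map of the cylinder with linear part $M$. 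Because $M$ fixes the horizontal direction, these local maps agree along the horizontal boundary saddle connections and glue into the desired global $\phi_M$.

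For the converse, assume $\FVS$ is non-discrete. I first show that $\Sigma$ must contain some affine cylinder of angle $\geq \pi$: otherwise Veech's theorem from Section \ref{triangulations} provides a geodesic triangulation of $\A$ whose finite union $E_\A \subset \C$ of Delaunay edge-holonomies is intrinsic to $\A$ and therefore $\FVS$-invariant; since $E_\A$ is finite and spans $\R^2$, its stabilizer in $\SLtR$ is finite, so $\FVS$ itself is finite and discrete, a contradiction. Hence $\Sigma$ admits maximal cylinders of angle $\geq \pi$. Let $\mathcal{C}$ denote the union of all such maximal cylinders: this is a canonical, hence $\FVS$-invariant, subsurface whose boundary, by Proposition \ref{saddleco}, is a union of saddle connections. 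The crux is to show that $\mathcal{C} = \Sigma$, because then the boundary saddle connections form a ribbon graph along which the maximal cylinders are glued exactly as in the construction of Section \ref{hopfsurfaces}, giving $\Sigma$ the structure of a Hopf surface.

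To rule out the alternative $\mathcal{C} \subsetneq \Sigma$, the complement $\Sigma' = \Sigma \setminus \mathcal{C}$ is a non-empty affine surface with boundary consisting of saddle connections, whose interior contains no cylinder of angle $\geq \pi$; applying Veech's theorem to $\Sigma'$, either directly or after a doubling construction along $\partial \Sigma'$, should again yield a finite $\FVS$-invariant configuration of vectors in $\C$ spanning $\R^2$, forcing $\FVS$ to be discrete and contradicting the assumption. The main obstacle lies precisely here: Veech's triangulation theorem is stated for closed affine surfaces, and one must either extend it to surfaces with boundary whose boundary is horizontal, or argue combinatorially by combining the finite set of horizontal saddle connections in $\partial \mathcal{C}$ with a Delaunay-type skeleton in the interior of $\Sigma'$ to build the required finite invariant by hand. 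The rest of the argument is linear algebra and geometric bookkeeping; the geometric heart of the theorem is the claim that a proper subsurface complementary to the large cylinders cannot coexist with a non-discrete symmetry group.
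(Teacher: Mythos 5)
There are two genuine problems with your argument for the converse direction. First, the discreteness step fails as stated: the Delaunay decomposition is defined via maximal immersed \emph{round} disks, so it is a conformal/metric notion, not an affine one. An element $M \in \FVS$ with $M \notin \mathrm{SO}(2)$ changes the conformal structure, so the Delaunay decomposition of $M\cdot\mathcal{A}$ is \emph{not} the $M$-image of that of $\mathcal{A}$, and your finite set $E_{\mathcal{A}}$ of edge-holonomies is not $\FVS$-invariant. If your argument were correct it would show that every surface with property $\mathcal{V}$ has \emph{finite} Veech group, which already fails for the \DbDu surface $\DD_{a,b}$ (it contains two independent parabolics). The object that actually is $\FVS$-invariant (up to the $\R^*_+$ normalization ambiguity, which must be handled by fixing a developing map on a simply connected set containing all singularities) is the set of \emph{all} saddle connection vectors; this set is discrete but typically infinite, and it yields discreteness rather than finiteness. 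This is exactly Lemma \ref{discretness} of the paper, which needs only the existence of saddle connections in two distinct directions.

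Second, the geometric heart of the converse --- showing that the union $\mathcal{C}$ of large cylinders is all of $\Sigma$, or more generally that non-discreteness forces the Hopf structure --- is the step you explicitly leave open ("the main obstacle lies precisely here"), and the proposed fix via a version of Veech's theorem for surfaces with boundary is not carried out. The paper sidesteps this entirely by organizing the dichotomy around \emph{directions of saddle connections} rather than around large cylinders: if saddle connections occur in two directions, Lemma \ref{discretness} applies; if they all lie in a single direction, one considers at each singular point $p$ the largest immersed semi-disk $\Delta_r$ bounded by two horizontal separatrices, and shows (using Proposition \ref{closedgeodesic} to exclude the case where the immersion does not extend to the boundary, since a separatrix cannot be a closed geodesic) that every separatrix in the distinguished direction is a saddle connection; such a surface is then read off as a Hopf surface in the sense of Subsection \ref{hopfsurfaces}. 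You should either adopt that dichotomy or supply a complete proof that $\mathcal{C}=\Sigma$; as written, both halves of your converse have gaps that are not merely technical.
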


The end of the section is devoted to proving Theorem
\ref{classification}.  To do so, we will distinguish between affine
surfaces having saddle connections in at least two directions and
those who do not. The former enjoy the property that their Veech group
is automatically discrete thanks to a classical argument inspired by
the case of translation surfaces (see Section 3.1 of
\cite{HubertSchmidt}). The latter will turn out to be Hopf surfaces
introduced in section \ref{hopfsurfaces}.

\begin{lemma}
  \label{discretness}
  Let $\Sigma$ be an affine surface having two saddle connections in
  different directions.  Then $\FVS$ is a discrete subgroup of
  $\SLtR$.
\end{lemma}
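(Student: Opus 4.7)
The plan is to adapt the classical argument for the discreteness of Veech groups of translation surfaces (see Section 3.1 of \cite{HubertSchmidt}) to the affine setting. The key is to exhibit a discrete subset of $\R^2$ on which $\FVS$ acts up to a scalar factor inherent to affine charts, and then to use that the simultaneous stabilizer in $\SLtR$ of two linearly independent vectors is trivial.

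First I would pass to the finite-index subgroup $\Gamma_0 \subset \FVS$ of elements whose associated affine diffeomorphism fixes each of the finitely many singular points of $\Sigma$. Then I would exhibit a singular point $s_0$ from which two saddle connections $\gamma_1, \gamma_2$ in linearly independent directions emanate: if the two saddle connections provided by the hypothesis do not already share a vertex, a short combinatorial argument (for instance using a Delaunay-style triangulation away from large cylinders, see Section \ref{triangulations}) produces such a configuration. Fixing a chart $\varphi_0$ at $s_0$ and developing, each saddle connection from $s_0$ yields a vector in $\C$, defining a set $V_0 \subset \R^2$ that contains two linearly independent vectors $v_1, v_2$ coming from $\gamma_1, \gamma_2$.

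For $M \in \Gamma_0$ the associated diffeomorphism $\phi_M$ fixes $s_0$, and in the chart $\varphi_0$ it reads $z \mapsto \lambda_M M z$ for some $\lambda_M \in \R^*_+$ reflecting the $\R^*_+$-scaling ambiguity of affine charts; an easy composition argument shows that $M \mapsto \lambda_M$ is a continuous homomorphism $\Gamma_0 \to \R^*_+$. The resulting action on developed vectors is $v \mapsto \lambda_M M v$, and $\Gamma_0$ preserves $V_0$ in this sense. Granting discreteness of $V_0$ in $\R^2$, any sequence $M_n \to I$ in $\Gamma_0$ would satisfy $\lambda_{M_n} \to 1$ and $\lambda_{M_n} M_n v_i \to v_i$ for $i=1,2$, forcing $\lambda_{M_n} M_n v_i = v_i$ for $n$ large. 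Since $v_1, v_2$ span $\R^2$, this yields $\lambda_{M_n} M_n = I$, and then $\det M_n = \lambda_{M_n}^{-2} = 1$ together with $\lambda_{M_n} > 0$ forces $\lambda_{M_n} = 1$ and $M_n = I$, contradicting the supposed accumulation.

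The hardest step is the discreteness of $V_0$. In the translation case it is immediate, since the deck group is a lattice of translations; in the affine case, however, the deck group embeds in $\Afr$ and the nontrivial dilations cause orbits of lifts of singular points in the developing image to accumulate on their fixed points. I would need to argue, using the compactness of $\Sigma$ and the finiteness of $\mathcal{S}$, that the chosen lift $\tilde s_0$ of $s_0$ is bounded away from these accumulation points, so that any bounded neighborhood of $\tilde s_0$ in $\C$ meets only finitely many lifts of $\mathcal{S}$. This compactness/local-finiteness verification is the technical heart of the argument.
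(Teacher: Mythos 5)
Your proposal follows essentially the same route as the paper: pass to the finite-index subgroup fixing the singular points, develop saddle connections to obtain an $\FVSo$-invariant discrete set of vectors in $\R^2$ containing a basis, and conclude that a sequence of group elements tending to the identity must eventually fix two independent vectors and hence be trivial. Your handling of the $\R^*_+$-normalization ambiguity (the homomorphism $M \mapsto \lambda_M$ and the determinant computation forcing $\lambda_{M_n} = 1$) is in fact slightly more careful than the paper's parenthetical remark on this point.

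Two comments. First, the reduction to two saddle connections sharing a vertex is an unnecessary detour and, as stated, a gap: it is not clear that a surface with saddle connections in two directions must possess a single singular point emitting saddle connections in two independent directions, and the "short combinatorial argument" you invoke is not supplied. The paper sidesteps this entirely by choosing a simply connected subset $U \subset \Sigma$ containing \emph{all} the singularities and fixing one developing map over $U$; this places the vectors of all saddle connections, whatever their endpoints, in a single coherent normalization, so the two saddle connections given by the hypothesis can be used directly. You should adopt that device. Second, you are right that the discreteness of the set of developed saddle-connection vectors is the technical heart in the affine setting (the holonomy contains genuine dilations, so local finiteness of the lifts of the singular set near the developed basepoint is not automatic as it is for translation surfaces); be aware, however, that the paper simply asserts this discreteness without proof, so on this point your proposal is no less complete than the published argument.
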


\begin{remark}
  As a direct corollary both the \GMP surface and the \DbDu surface
  have a discrete Veech group.
\end{remark}

\begin{proof}
  Consider $\FVSo$ the subgroup of $\FVS$ fixing point-wise the set of
  singular points of $\Sigma$. This subgroup has finite index in
  $\FVS$ and its being discrete implies discreteness for $\FVS$.  \\
  Choose an arbitrary simply connected subset $U \subset \Sigma$
  containing all the singularities and an arbitrary developing map of
  the affine structure on $U$.  Thanks to this developing map, we can
  associate to each oriented saddle connection a vector in $\R^2$. The
  set of such vectors enjoys two nice properties:

  \begin{itemize}
  \item it is discrete;
  \item it is invariant under the action of $\FVSo$.
  \end{itemize}

  We have made the hypothesis that a pair of those vectors
  $(v_1, v_2)$ form a basis in $\R^2$.  If $a_n$ is a sequence of
  elements of $\FVSo$ going to identity and $A_n$ the matrices of
  their action in the normalization induced by $U$ (notice that an
  element of $\FVS$ is an element of the quotient
  $\mathrm{Gl}^+_2(\R)/ \R^*_+$ and therefore the matrix of its action
  depends on normalization both at the source and at the target).
  $A_n \cdot v_1 \rightarrow v_1$ and $A_n \cdot v_2 \rightarrow v_2$
  but the discreteness of the set of saddle connection vectors implies
  that for $n$ large enough $A_n \cdot v_1 = v_1$ and
  $A_n \cdot v_2 = v_2$, and thus $A_n = \mathrm{Id}$. Which proves
  the discreteness of the Veech group.
\end{proof}

We will now characterize the affine surfaces having saddle connections
in at most one direction. The two following lemmas will complete the
classification.

\begin{lemma}
  If all the saddle connections of an affine surface $\Sigma$ are in
  the same direction it is a Hopf surface.
\end{lemma}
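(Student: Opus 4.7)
Without loss of generality, assume the common direction of all saddle connections on $\Sigma$ is horizontal. Let $\Gamma \subset \Sigma$ denote the union of all horizontal saddle connections; it is a finite graph whose vertex set is the set of singular points of $\Sigma$, and the orientation of $\Sigma$ endows $\Gamma$ with a canonical cyclic ordering of the edges at each vertex, making it a ribbon graph in the sense of Section \ref{hopfsurfaces}. The plan is to show that every connected component $U$ of $\Sigma \setminus \Gamma$ is isometric to the interior of an affine cylinder; the decomposition $\Sigma = \Gamma \cup \bigsqcup U_i$ will then exhibit $\Sigma$ as the Hopf surface built from $\Gamma$.

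The first observation is the geometric constraint on any maximal cylinder $C \subset \Sigma$, realized as the embedded image of some $C_{\theta_1,\theta_2}$. By Proposition \ref{saddleco}, each of the two boundary components of $C$ is a union of saddle connections, and by hypothesis these are all horizontal. Since the boundary curves of $C_{\theta_1,\theta_2}$ are closed leaves in directions $\theta_1$ and $\theta_2$, this forces $\theta_1 \equiv \theta_2 \equiv 0 \pmod{\pi}$. In particular, the angle of any maximal cylinder is a positive integer multiple of $\pi$ and its boundary lies in $\Gamma$. Note also that some maximal cylinder of angle $\geq \pi$ must exist: because $\Sigma$ has singular points but no geodesic triangulation is possible with all edges parallel, Veech's theorem (cited in Section \ref{triangulations}) precludes a triangulation and hence forces the presence of such a cylinder.

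The heart of the proof is showing that these maximal cylinders exhaust $\Sigma \setminus \Gamma$. Fix a connected component $U$ of $\Sigma \setminus \Gamma$ and consider the horizontal foliation on $U$. I aim to prove that every horizontal leaf in $U$ is closed; granted this, $U$ is foliated by horizontal closed curves, the first-return map on a transverse arc is an affine self-map, and $U$ is presented as a sector of an $\infty$-Hopf cylinder with both boundary circles in $\Gamma$, i.e. as an affine cylinder. To rule out a non-closed horizontal leaf $\ell \subset U$: such an $\ell$ cannot terminate at a singular point (else $\ell$ would be a horizontal separatrix sitting in $\Gamma$), so $\ell$ is a copy of $\mathbb{R}$ in $U$ and, by compactness of $\Sigma$, accumulates on some horizontal-invariant closed set $K \subset \overline{U}$ containing regular points. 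At an accumulation point, applying Proposition \ref{closedgeodesic} to a small disk tangent to $\ell$ produces a nearby closed hyperbolic geodesic; the associated maximal cylinder must then contain a point of $U$, forcing $\ell$ itself to be a closed leaf of this cylinder, a contradiction.

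The main obstacle lies precisely in this last step: controlling the horizontal dynamics in $U$ and excluding Denjoy-type exotic accumulations. The hypothesis that no saddle connections exist in any non-horizontal direction is used crucially here, since it prevents transverse separatrices from obstructing the argument, and Proposition \ref{closedgeodesic} — the technical cornerstone borrowed from Veech — is indispensable to convert an accumulation point into an honest closed geodesic around which a maximal cylinder can be extracted. Once every component $U$ is recognized as an affine cylinder, the ribbon graph $\Gamma$ together with these cylinders realizes $\Sigma$ as a Hopf surface.
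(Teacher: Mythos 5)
There is a genuine gap, and the central one is conceptual: your target statement about the components $U$ of $\Sigma\setminus\Gamma$ is not the right one. You set out to show that each $U$ is ``foliated by horizontal closed curves,'' but this is false for the very surfaces you are characterizing. In a Hopf surface the complement of the ribbon graph is a union of affine cylinders of angle $k\pi$, and inside such a cylinder the closed leaves sweep through a continuously varying family of directions; the \emph{horizontal} foliation there has only finitely many closed leaves, all the others spiraling towards them (this is already the picture on the Hopf torus, where each directional foliation has exactly two closed leaves, one attracting and one repelling). Indeed a region foliated by closed horizontal leaves each carrying a hyperbolic return map cannot exist: if the first return on a transversal is $x\mapsto\lambda x$ with $\lambda\neq 1$, the neighbouring horizontal leaves do not close up. So no amount of work on the horizontal dynamics in $U$ can establish the property you aim for, and the plan has to be reorganized around a different intermediate statement.

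The technical steps are also flawed. Proposition \ref{closedgeodesic} applies to an immersion of $\mathbb{D}$ that does \emph{not} extend continuously to $\overline{\mathbb{D}}$; a small disk tangent to a leaf at an accumulation point is not such an immersion in general (on a translation surface a minimal leaf accumulates everywhere, and there is certainly no hyperbolic closed geodesic nearby). One must first manufacture a maximal immersed disk or half-disk whose failure to extend is forced, and this is exactly what the paper does: it grows a half-disk $\Delta_r$ at a singular point, bounded by two separatrices in the distinguished direction's orthogonal and containing a separatrix transverse to them, and shows that (i) $r=\infty$ would produce two saddle connections in a forbidden direction, and (ii) failure of the immersion to extend to $\partial\Delta_r$ would, via Proposition \ref{closedgeodesic}, force a separatrix to close up into a hyperbolic closed geodesic, which is absurd; hence the immersion extends, hits a singular point, and every separatrix in the distinguished direction is a saddle connection, from which the Hopf structure follows. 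Finally, your dismissal of leaves terminating at singular points conflates separatrices with saddle connections: a separatrix in the distinguished direction that is not a saddle connection is not contained in $\Gamma$, so it may perfectly well lie in $U$ --- proving that this does not happen is precisely the content of the lemma, not something that can be assumed.
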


\begin{proof}

  We make the assumption that $\Sigma$ has at least a singular point
  (if not, Lemma \ref{khopf} settles the question).  We are going to
  prove that every singular point has a least a saddle connection in
  every angular sector of angle $\pi$. An affine surface whose all
  separatrices in one direction are saddle connection is easily seen
  to be a Hopf surface, see Subsection \ref{hopfsurfaces}.

  \vspace{2mm}

  Consider the exponential map at a singular point $p$ associated to a
  local affine normalization and let $r>0$ be smallest radius such
  that $\Delta_r$ the (open) semi-disk of radius $r$, bounded below by
  two horizontal separatices and containing a vertical separatrix,
  immerses in $\Sigma$ by means of the exponential map. If
  $r= \infty$, we would have a maximal affine immersion of
  $\mathcal{H}$, whose boundary would project to a closed leaf
  containing $p$ and there would be two saddle connections in the
  horizontal direction. Otherwise $r < \infty$. There can be two
  different reasons why $\Delta_{r'}$ does not immerse for $r'>r$:

  \begin{enumerate}

  \item either the immersion $\Delta_r \longrightarrow \Sigma$ extends
    continuously to the boundary of $\Delta_r$ and the image of this
    extension contains a singular point. In that case this singular
    point must be in the unique direction containing saddle
    connections (say the vertical one) and the vertical separatrix was
    actually a saddle connection; \sk

  \item or $\Delta_r \longrightarrow \Sigma$ does not extend to the
    boundary of $\Delta_r$.

  \end{enumerate}

  We prove that the latter situation cannot occur. Otherwise there
  would be a geodesic $\gamma$ issued at $p$ affinely parametrized by
  $[0,r)$ such that $\gamma(t)$ does not have a limit in $\Sigma$ when
  $t$ tends to $r$. We make the confusion between $\gamma$ as a subset
  of $\Sigma$ and its pre-image in $\Delta_r$.  Considering an open
  disk in $\Delta_r$ which is tangent at the boundary to the point in
  $\partial \Delta_r$ towards which $\gamma$ is heading and such that
  its center belongs to $\gamma$. By means of the immersion of
  $\Delta_r$ in $\Sigma$, it provides an affine immersion of $\D$
  satisfying the hypothesis of Proposition
  \ref{closedgeodesic}. $\gamma$ would therefore project to a closed
  hyperbolic geodesic, which contradicts its being a separatrix.

\end{proof}

\subsection{The Veech group of an affine surface is probably not a
  lattice.}

We start by showing the following lemma,
\begin{lemma}
  If $\FVS$ is discrete it cannot be cocompact.
\end{lemma}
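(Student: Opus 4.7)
The strategy is to argue by contradiction, assuming $\FVS$ is both discrete and cocompact, and to produce a $\FVS$-invariant discrete subset of $\R^2\setminus\{0\}$ together with a classical density statement incompatible with that discreteness.

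Since $g\ge 2$, the discrete Gauss--Bonnet formula guarantees that $\Sigma$ has at least one singular point. Fix one, call it $s_0$, and let $\FVS_{s_0}\subset\FVS$ be its stabilizer. Any affine diffeomorphism of $\Sigma$ must permute the finite set of singular points (it preserves cone angles), so $\FVS_{s_0}$ has finite index in $\FVS$ and in particular is itself cocompact in $\SLtR$. Fix once and for all an affine chart at $s_0$, and let $\mathcal V\subset\R^2\setminus\{0\}$ be the set of exit vectors at $s_0$ of saddle connections starting from $s_0$, read in that chart. A standard compactness argument on the closed surface $\Sigma$ shows that $\mathcal V$ is \emph{discrete} in $\R^2\setminus\{0\}$: in a fixed chart there are only finitely many saddle connections of length at most $L$, for every $L>0$.

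For each $M\in\FVS_{s_0}$, the associated affine diffeomorphism $\phi_M$ fixes $s_0$ and sends each saddle connection starting at $s_0$ to another such saddle connection. Because $\phi_M$ is locally affine on $\Sigma\setminus\mathcal S$ with linear part $M$ (up to the positive scalar ambiguity inherent in real-affine charts), the induced action on exit vectors is by $v\mapsto r_M\, M\cdot v$ for a certain positive scalar $r_M$, so that $\mathcal V$ is invariant under this twisted action of $\FVS_{s_0}$.

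The contradiction now comes from the classical minimality of the horocycle flow on compact quotients of $\SLtR$ (Hedlund--Furstenberg): for a cocompact lattice $\Gamma\subset\SLtR$, every orbit of the unipotent subgroup $U$ on $\Gamma\backslash\SLtR$ is dense, and dually every $\Gamma$-orbit on $\SLtR/U\simeq\R^2\setminus\{0\}$ is dense. Applied to $\Gamma=\FVS_{s_0}$, the $\FVS_{s_0}$-orbit of any exit vector is dense in $\R^2\setminus\{0\}$, which is incompatible with being contained in the discrete set $\mathcal V$.

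The main technical point is to control the positive scalars $r_M$ in the third paragraph: in the translation-surface case one has $r_M\equiv 1$ and the density-versus-discreteness contradiction is immediate; in the general affine case, where $M\mapsto r_M$ is a homomorphism $\FVS_{s_0}\to\R_{>0}$, one has to check that rescaling by these scalars cannot collapse a dense orbit into the discrete set $\mathcal V$. I expect this to be where the bookkeeping lies, but the rest of the argument is classical and short.
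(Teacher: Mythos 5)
Your approach is genuinely different from the paper's, but the step you defer to ``bookkeeping'' is in fact the crux, and as written the argument does not close. The Hedlund--Furstenberg density statement applies to the honest linear action of a cocompact lattice $\Gamma\subset\SLtR$ on $\R^2\setminus\{0\}$. Here the action preserving $\mathcal V$ is the twisted one $v\mapsto \rho(M)\,M v$, where $\rho:\FVS_{s_0}\to\R_{>0}$ is the homomorphism recording the dilation factor of each automorphism at $s_0$ (and for a surface group $H^1(\Gamma;\R)\neq 0$, so $\rho$ has no reason to be trivial --- on a genuinely affine surface one expects it not to be). Under this twist the only thing that survives is the density of the set of \emph{directions} $\{v/\lVert v\rVert : v\in\mathcal V\}$ in $S^1$, since the scalars cancel upon normalizing; the radial coordinate is multiplied by the uncontrolled factor $\rho(M)$, so the twisted orbit of a vector need not be dense, and no contradiction with discreteness follows: a discrete subset of $\R^2\setminus\{0\}$ can perfectly well have a dense set of directions. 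Killing $\rho$ by passing to its kernel does not help either, since $\ker\rho$ is a normal subgroup of infinite index and is no longer a lattice, so Hedlund's theorem no longer applies to it. Two smaller points: you never argue that $\mathcal V\neq\emptyset$, i.e.\ that some saddle connection actually emanates from $s_0$ (the paper gets existence of saddle connections from Theorem \ref{classification}: a discrete Veech group forces $\Sigma$ not to be a Hopf surface, hence to have saddle connections in two directions); and the discreteness of $\mathcal V$ in the affine setting is itself only asserted, though the paper does the same in Lemma \ref{discretness}.

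For comparison, the paper's proof is designed precisely to sidestep the scalar ambiguity that blocks you: instead of holonomy vectors (which are only defined up to a global positive scalar per chart), it uses a scale-\emph{invariant} quantity, namely the ratio $L/l$ of the lengths of the second-shortest and shortest saddle connections at a singular point, minimized over singular points. This descends to a genuine continuous $\FVS$-invariant function on $\Hy$, hence a continuous function on $\Hy/\FVS$, and applying the diagonal flow $\mathrm{diag}(e^t,e^{-t})$ with the shortest saddle connection horizontal makes the ratio blow up, so the quotient cannot be compact. If you want to salvage your route, you would need an analogous scale-free invariant of $\mathcal V$ (ratios of determinants of pairs of vectors, say) on which the twisted action still produces a density-versus-discreteness contradiction; that is exactly the part you have not supplied.
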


\begin{proof}
  To show this we only need to find a continuous function on
  $\Hy / \FVS$ that is not bounded. For flat surface the systole does
  the trick, here we cannot define length of saddle connections but
  the ratio of the two lengths of saddle connections that intersect.
  Thus we can define the shortest and the second shortest simple
  saddle connections starting at a given singular point, and we denote
  by $l$ and $L$ their length in one arbitrary chart around the
  singularity. These two distinct saddle connections exist since we
  assume $\FVS$ discrete according to Theorem \ref{classification}.
  Now the ratio $L/l$ is independent from the previous choice of
  chart.  And if we take the minimum value of this ratio on all the
  singularities, it will be a continuous function on the $\Hy$ orbit
  of the surface invariant by the Veech group.

  To finish the argument, apply the Teichmüller deformation of the
  surface with matrices of the form
  $$
  \left(
    \begin{array}{c c}
      e^t & 0      \\
      0   & e^{-t}
    \end{array}
  \right)$$ where the horizontal direction will be the direction of
  the smallest saddle connection with the smallest ratio.  This
  deformation will decrease the length $l$ and increase $L$ as $t$
  goes to infinity and thus make this function go to infinity.
\end{proof}

As we saw in Section \ref{cylinders}, cylinders trap the linear flow
in their corresponding angular sector. This behaviour restricts the
potential directions for saddle connections around a singularity in
the boundary of cylinders and prevent
Veech groups from being lattices.\\

\begin{figure}[h]
  \includegraphics[width=.2\linewidth]{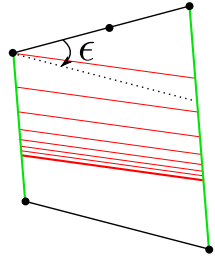}
  \caption{Angular section on which leaves are hyperbolic}
  \label{angular}
\end{figure}

Consider $\Sigma$ an affine surface endowed with an affine cylinder,
take any singular point at the border of this cylinder and flow a leaf
heading inside the cylinder whose angular direction falls just
in-between the two extreme angles of the cylinders.  This leaf will be
trapped inside the cylinder and accumulate to a close leaf (Figure
\ref{angular}).  As a consequence none of these leaves will meet a
singular point. There won't be any saddle connection starting from the
chosen point in the angular sector
define by the cylinder.\\

This implies the following proposition :

\begin{proposition}
  \label{nolattice}
  If $\Sigma$ is an affine surface with an affine cylinder then $\FVS$
  is not a lattice.
\end{proposition}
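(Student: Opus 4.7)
The plan is to assume for contradiction that $\FVS$ is a lattice in $\SLtR$ and derive a contradiction from the obstruction to saddle connection directions around a boundary singular point of the cylinder. Let $C \subset \Sigma$ be the affine cylinder, with angular range $(\theta_1, \theta_2)$, and let $p$ be a singular point on $\partial C$. Such a $p$ is supplied by Proposition \ref{saddleco}, after noting that the $k$-Hopf torus case is already disposed of by Lemma \ref{khopf} (in that case $\FVS = \SLtR$, which is not a lattice).

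The key geometric input, already established in the paragraph preceding the proposition, is that any leaf issued from $p$ with direction strictly inside $(\theta_1,\theta_2)$ gets trapped in $C$ and accumulates on a closed hyperbolic leaf, hence never reaches another singular point. Writing $D_p$ for the set of initial directions of saddle connections emanating from $p$, this says $D_p \cap (\theta_1,\theta_2) = \emptyset$: the set $D_p$ misses a non-empty open arc of the circle of directions at $p$. Moreover $D_p$ is non-empty, since by Proposition \ref{saddleco} the boundary of $C$ is itself a union of saddle connections based at singular points including $p$.

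Next I would exploit minimality. Consider the stabilizer $H = \Stab_{\FVS}(p)$. Since the singular set is finite and $\FVS$ permutes it, $H$ has finite index in $\FVS$; if $\FVS$ were a lattice in $\SLtR$ then so would $H$ be. Lattices in $\SLtR$ act minimally on $\R P^1$: the limit set is all of $\R P^1$ and the action on the limit set is minimal, so every non-empty $H$-invariant subset of $\R P^1$ is dense. Each element of $H$ is represented by an affine self-diffeomorphism of $\Sigma$ fixing $p$, and such a diffeomorphism sends saddle connections from $p$ to saddle connections from $p$, acting on their initial directions through the derivative $M \in \SLtR$. So $D_p$ is $H$-invariant. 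But $D_p$ is non-empty and misses the open arc $(\theta_1,\theta_2)$, contradicting density.

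The principal obstacle I expect is the cone angle at $p$: if $p$ has cone angle $2k\pi$ with $k \geq 2$, directions at $p$ actually live on a $k$-fold cover of $\R P^1$, the forbidden sector lives on the single sheet that borders $C$, and after projection to $\R P^1$ the gap could in principle be filled in by directions coming from other sheets. I would address this by first replacing $H$ by the finite-index subgroup $H' \subset H$ acting trivially on the set of sheets — still a lattice in $\SLtR$ — and then running the preceding $D_p$-versus-minimality argument on the single sheet containing the forbidden sector, on which $H'$ acts by the usual linear action of $\SLtR$ on $S^1$ and on which the gap persists.
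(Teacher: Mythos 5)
Your overall strategy is the same as the paper's up to the last step: both proofs reduce to the observation (stated just before the proposition) that a boundary singularity $p$ of the cylinder has an open sector of directions containing no saddle connection, and both then argue that a lattice Veech group would force saddle-connection directions to be dense. The difference is in how density is obtained, and this is where your argument has a genuine gap. You invoke minimality of the lattice action on $\R P^1$, but the set $D_p$ does not naturally live on $\R P^1$: directions at a cone point of angle $2k\pi$ form a connected $2k$-fold cover $\tilde S$ of $\R P^1$, and the forbidden sector is an arc of $\tilde S$. Its image in $\R P^1$ (or even in $S^1$) need not miss any open set, since antipodal directions and directions on the other ``sheets'' at $p$ point into entirely different parts of the surface and may well be saddle-connection directions. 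So minimality downstairs gives no contradiction. What you actually need is that a non-empty closed invariant subset of $\tilde S$ under the \emph{lifted} action cannot miss an open arc, and this is not a formal consequence of minimality on $\R P^1$ (an invariant set upstairs can surject onto a dense subset of $\R P^1$ while omitting an arc of $\tilde S$). Your proposed repair does not work as stated: the covering $\tilde S \to S^1$ is a connected circle-to-circle cover, so there is no invariant partition into ``sheets,'' no well-defined action of $H$ on a ``set of sheets,'' and hence no finite-index subgroup ``acting trivially on the sheets'' on which the problem disappears. Minimality of lifted lattice actions on finite covers of the circle is true, but it is a genuinely stronger statement requiring its own proof (e.g.\ via minimality of the $AN$-action on finite covers of $\Gamma \backslash \SLtR$), and you cannot simply cite the $\R P^1$ case.

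The paper sidesteps exactly this difficulty by using parabolics instead of minimality: since a discrete Veech group is not cocompact (by the preceding lemma), a lattice $\FVS$ would be non-uniform, hence would have a dense set of parabolic fixed directions in $\partial\Hy$, and in a parabolic direction \emph{every} separatrix from $p$ — on every sheet of the cone — is a saddle connection. Density of parabolic directions in $S^1$ therefore directly places a saddle connection inside the forbidden sector of $\tilde S$, with no covering-space issue. If you want to keep your minimality-based route, you must either prove minimality of the action on $\tilde S$ itself, or replace $D_p$ by a set that genuinely lives on $S^1$ or $\R P^1$ and still omits an open set of directions; as written, the contradiction is not established. The rest of your argument (reduction to a boundary singularity via Proposition \ref{saddleco}, disposal of the Hopf torus case, passage to the finite-index stabilizer of $p$, invariance of $D_p$) is correct and matches the paper.
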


\begin{proof}
  Assume for a contradiction that $\FVS$ is a lattice. Take a finite
  index subgroup which stabilizes all the singularities of the
  surface.  As an assumption on $\Sigma$ there is an affine cylinder
  in the surface, Proposition \ref{saddleco} implies that there is a
  singularity in the boundary of the cylinder, and a small angular
  section in which any separatrix from this singularity will
  accumulate to a closed leaf.  As the subgroup is a lattice it
  contains a parabolic element and its limit set is the whole border
  of $\mathbb H$.  Then we can conjugate this parabolic element to see
  that there is a dense set of direction in which there is a
  parabolic. In parabolic directions, the separatrices are all saddle
  connections. Indeed, if not there would be an accumulation point,
  and a neighborhood of this point would be crossed infinitely many
  times by the separatrix on which the parabolic acts as the
  identity. Hence the parabolic element would act as the identity on
  the whole neighborhood, which is a contradiction.  This shall not be
  since we showed that there cannot be saddle connection in an open
  set of directions around the singularity.
\end{proof}

\section{Cylinders on genus 2 surfaces.}
\label{thmgenus2}

The purpose of this section is to show the following result

\begin{theorem}
  \label{cylinderingenus2}
  Any affine surface of genus $2$ has a cylinder.
\end{theorem}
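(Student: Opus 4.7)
The natural first move is a dichotomy based on Veech's triangulation theorem (stated in Section \ref{triangulations}). If $\Sigma$ already contains an affine cylinder of angle $\geq \pi$, then there is nothing to prove. Otherwise, Veech's theorem applies and $\Sigma$ carries a geodesic triangulation $\mathcal{T}$ whose vertex set is exactly the singular set $\mathcal{S}$.

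Next I would exploit the very rigid stratification in genus $2$. The discrete Gauss--Bonnet equality $\sum(k_i - 1) = 2$ leaves only two possibilities: stratum $(4)$ with a single cone point of angle $6\pi$, or stratum $(2,2)$ with two cone points of angle $4\pi$. Combining Euler's formula $V - E + F = -2$ with $2E = 3F$ forces $(V,E,F) = (1,9,6)$ in the first case and $(V,E,F) = (2,12,8)$ in the second. These numbers are reassuringly small, and they are confirmed by the global angle count $F \pi = \sum_i 2 k_i \pi$ (each affine triangle still has angle sum $\pi$, because angles are preserved by the structural group $\Afr$).

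The core of the argument is then a combinatorial analysis of $\mathcal{T}$. Each triangle carries three pairwise non-parallel edges, each edge is shared by exactly two triangles, and at every singular point the corners of triangles partition the cone angle into sectors of total angular measure $2k_i\pi$. The goal is to find a closed cycle of edges that all point in the same direction: such a parallel cycle of saddle connections bounds a tube of parallel leaves, and by the dynamics of Section \ref{cylinders} a thin neighbourhood of this cycle embeds as an affine cylinder. To produce such a cycle I would work with a Delaunay-type triangulation (as in the appendix) so that the empty-circumdisk condition gives angle bounds on opposite corners across each edge. One then enumerates the finitely many possible combinatorial patterns of a triangulated affine genus-$2$ surface with $(V,E,F) = (1,9,6)$ or $(2,12,8)$, and in each pattern exhibits either a direct parallel edge-cycle or a short sequence of edge-flips leading to one.

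The main obstacle is exactly this last enumeration: although the triangulation is tiny, the number of combinatorial types is not a priori negligible, and one has to mix purely combinatorial bookkeeping (the corner counts at each singular point, the dual graph whose first Betti number is $4$ or $5$) with genuine affine geometry (the direction of each edge and how directions change under flips). I would begin with stratum $(2,2)$, where the two-vertex structure allows cleaner corner counting, and then adapt the argument to the slightly looser stratum $(4)$. If a direct enumeration proves unwieldy, a useful fallback is to argue by contradiction: assume no cylinder, pick a short saddle connection, and apply Proposition \ref{closedgeodesic} to a disk tangent to a putative maximal sector in order to produce a closed hyperbolic geodesic, whose existence then contradicts the absence-of-cylinder hypothesis.
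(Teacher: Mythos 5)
Your opening moves coincide with the paper's: the dichotomy via Veech's triangulation theorem (no geodesic triangulation forces a cylinder of angle $\geq\pi$), the reduction to the two strata $(2,2)$ and $(4)$, and the Euler counts $(V,E,F)=(2,12,8)$ and $(1,9,6)$ are all exactly as in Section \ref{thmgenus2}. But the heart of the argument is missing, and the mechanism you propose for extracting a cylinder is not sound as stated. A ``parallel cycle of saddle connections'' does \emph{not} in general have a thin neighbourhood that is a cylinder: a closed geodesic loop through a cone point bounds a cylinder on a given side only when the angle it cuts off on that side is exactly $\pi$, since nearby parallel leaves on a side of angle $>\pi$ fail to close up. The paper's proof turns precisely on this angle condition rather than on parallelism of a cycle of edges. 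In the one-vertex case every edge of the triangulation is already a closed loop based at the $6\pi$ cone point, and orientability of the directional foliations forces the two sectors it cuts to be $(\pi,5\pi)$ or $(3\pi,3\pi)$; the whole content is then Lemma \ref{combinatoire}, a purely topological statement (nine loops meeting pairwise only at $p$, each of type $(3\pi,3\pi)$ and hence of intersection number $\pm1$ with the others, cannot coexist), which guarantees an edge of type $(\pi,5\pi)$ and hence a cylinder on the $\pi$ side. In the stratum $(2,2)$ a single triangle with a repeated vertex already yields a loop at a $4\pi$ point, necessarily of type $(\pi,3\pi)$. Your proposal never reaches either of these points: the enumeration you describe is explicitly left unexecuted, and no criterion is given for when a ``pattern'' yields a cylinder.

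The fallback you offer also cannot get started. If the surface contains no affine cylinder of angle $\geq\pi$, it satisfies property $\mathcal{V}$, hence (by Veech's equivalence) property $\mathcal{V}'$: every affine immersion of $\mathbb{D}$ extends continuously to $\overline{\mathbb{D}}$. So under your standing assumption there is no wild immersion of the disk to feed into Proposition \ref{closedgeodesic}, and the contradiction you hope for is unavailable. To repair the proposal you would need to replace the ``parallel cycle'' idea by the angle-$\pi$ criterion and then actually prove the combinatorial exclusion of the all-$(3\pi,3\pi)$ configuration, which is where the real work of the theorem lies.
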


First, remark that Veech's theorem on Delaunay triangulation (see
Veech's theorem in \ref{triangulations}) tells us that if an affine
surface does not have a triangulation, it must contain an affine
cylinder (of angle at least $\pi$). We can therefore forget about this
case and assume that all the surfaces we are looking at have geodesic
triangulations.  \\ The theorem is also easy to prove when the surface
has two singular points which both must be of angle $4\pi$. Consider a
triangle of a Delaunay triangulation of the surface, at least two of
its vertices are equal to the same singular point and the side
corresponding to these two vertices is a simple closed curve. It must
cut the angle of the associated singular point into two angular
sectors of respective angle $3\pi$ and $\pi$. Which implies that it
bounds a cylinder on the side of the angle $\pi$.

\vspace{2mm} For the remainder of the section, $\Sigma$ is a surface
of genus $2$ together with a strictly affine structure whose unique
singular point of angle $6\pi$ is denoted by $p$. A geodesic
triangulation of $\Sigma$ must have exactly 9 edges and 6
triangles. In this particular case where the triangulation has a
unique vertex, each edge defines a simple closed curve, geodesic away
from $p$ and cutting the latter into two angular sector. Because each
directional foliation is oriented, such an edge must cut the angle
$6\pi$ into two angles of respective values either $5\pi$, $\pi$ or
$3\pi$, $3\pi$.  \\ The lemma below proves that any such geodesic
triangulation has at least one edge cutting the singular point into
two angles $5\pi$ and $\pi$. The theorem is a direct consequence of
this lemma.

\begin{lemma}
  \label{combinatoire}
  A geodesic triangulation of $\Sigma$ cannot have its $9$ edges
  cutting $p$ into two sectors of angles $3\pi$.
\end{lemma}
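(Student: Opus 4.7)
The plan is to translate the assumption into a purely combinatorial statement about the ribbon graph structure at $p$, and then derive a contradiction from a face count. I would label the $18$ half-edges meeting $p$ as $h_1, \ldots, h_{18}$ in cyclic order around $p$ (using the orientation of $\Sigma$), and denote by $P_k \in [0, 6\pi)$ the angular position of $h_k$, so that $0 = P_1 < P_2 < \cdots < P_{18} < 6\pi$. Each edge of the triangulation pairs two half-edges $h_i, h_j$ with $i<j$ and cuts the cone at $p$ into sectors of measures $P_j - P_i$ and $6\pi - (P_j - P_i)$; the hypothesis is exactly that $P_j - P_i = 3\pi$ for every pair.

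The first step is to observe that this condition forces the edge involution $\iota$ to act on the cyclic order at $p$ by $\iota(h_k) = h_{k+9}$ (indices mod $18$). Reducing positions modulo $3\pi$ collapses each pair to a single value and is therefore exactly two-to-one on our $18$ positions; each fiber consists of a point $x \in [0, 3\pi)$ and its partner $x + 3\pi \in [3\pi, 6\pi)$. Hence the $9$ half-edges lying in $[0, 3\pi)$ are precisely $h_1, \ldots, h_9$, the ones in $[3\pi, 6\pi)$ are $h_{10}, \ldots, h_{18}$, and $P_{k+9} = P_k + 3\pi$, so $\iota(h_k) = h_{k+9}$.

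The second step is to count the faces of the resulting ribbon graph by tracing boundaries. Starting from the corner $c_k$ between $h_k$ and $h_{k+1}$, the face boundary leaves $p$ along $h_{k+1}$, crosses the corresponding edge to $\iota(h_{k+1}) = h_{k+10}$, and continues with the corner $c_{k+10}$. So the face-tracing permutation on the $18$ corners is $k \mapsto k + 10 \pmod{18}$. Since $\gcd(10, 18) = 2$, it has exactly two orbits, each of length $9$, giving $F = 2$ faces.

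The contradiction now comes in two equivalent flavors: either $V - E + F = 1 - 9 + 2 = -6 \neq -2 = \chi(\Sigma)$, contradicting that $\Sigma$ has genus $2$; or the two faces have $9$ corners apiece and are therefore $9$-gons rather than triangles, contradicting that the cell decomposition is a triangulation. The only delicate point in the argument is the justification of the ``shift by $9$'' description of $\iota$ from the angular hypothesis; once that pairing is established, the face-tracing calculation is immediate.
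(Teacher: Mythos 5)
Your proof is correct, and it takes a genuinely different route from the paper's. The paper extracts from the angular hypothesis only the weaker consequence that any two of the nine edges, viewed as loops based at $p$, must intersect with local intersection number $\pm 1$ (both darts of one loop cannot lie in a single open $3\pi$-sector cut by another, since they are at angular distance exactly $3\pi$ both ways around), and then rules out nine pairwise-interleaving loops on a genus $2$ surface by a topological argument on an octagon fundamental domain, supported by figures. You instead use the full strength of the hypothesis: the exact $3\pi$ spacing pins down the edge involution as the shift $h_k \mapsto h_{k+9}$ in the cyclic order of the $18$ darts, after which the face-tracing permutation $c_k \mapsto c_{k+10}$ has $\gcd(10,18)=2$ orbits of length $9$, and the contradiction ($1-9+2=-6\neq -2$, or equivalently two $9$-gons instead of six triangles) is immediate. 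This buys a fully rigorous, figure-free computation in the language of combinatorial maps, and it even identifies the unique rotation system that the hypothesis would force, whereas the paper's argument is more pictorial and only uses the interleaving pattern. Two small points you should make explicit: first, the strict inequalities $P_1 < \cdots < P_{18}$ require that distinct half-edges at $p$ have distinct directions, which holds because two geodesic segments issued from $p$ in the same direction coincide near $p$, contradicting that edges of a triangulation meet only at vertices; second, the face permutation depends on a tracing convention (one gets $k\mapsto k+10$ or $k\mapsto k+8$), but both have exactly two orbits, so the conclusion is unaffected.
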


\begin{proof}
  The property on the edges implies that they all intersect with
  number $\pm 1$ at the only vertex of the triangulation.  Since every
  closed leaf separates the cylinder at $p$ in two angular sectors of
  angle $3\pi$ an oriented leaf can't have it's in-going and outgoing
  parts in the same sector cut by any other leaf (the angle would have
  to be smaller).

  We can always see topologically our surface as an octagon which
  boundary are edges of the triangulation. Take for example the
  maximal sub-graph of the $1$-skeleton of the triangulation such as
  the complementary of it in the surface is connected and
  simply-connected, this complement will be the fundamental domain we
  are looking for.  Now the intersection number property tells us that
  the configuration of the path at the order will be in the setting of
  Figure \ref{separatrix}

  \begin{figure}[h]
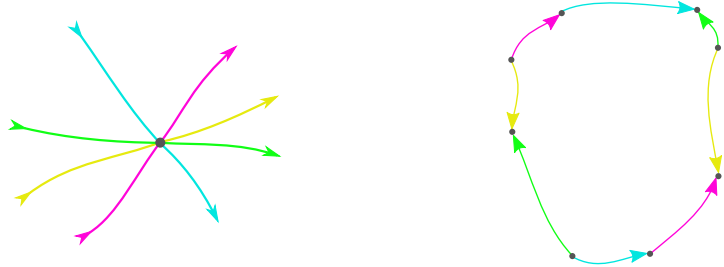

    \begin{subfigure}[c]{.4\linewidth}
      \centering
      \includegraphics[height=3cm]{separatrix.pdf}
    \end{subfigure}
    \begin{subfigure}[c]{.5\linewidth}
      \centering
      \includegraphics[height=3.5cm]{octogon.pdf}
    \end{subfigure}
    \caption{Topological setting of the separatrix diagram}
    \label{separatrix}
  \end{figure}

  Now consider yet another of the $9$ edges, as it has to intersect
  all of the other path with $\pm 1$ the only possibility is when the
  curves starts between two colors and end up between the two same
  colors.

  Let's add one of these edges as in Figure \ref{separated}.  It is
  clear now that we won't be able to add any curve with the same
  property since they can't intersect a curve in other point than $p$.

  \begin{figure}[h]
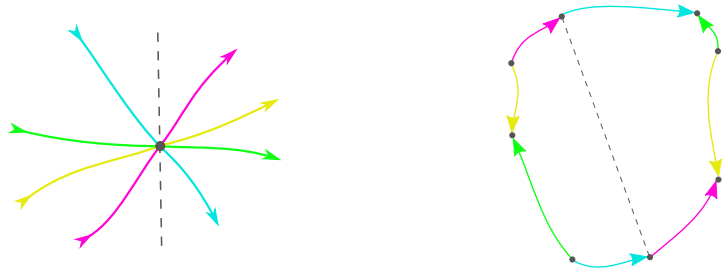

    \begin{subfigure}[c]{.4\linewidth}
      \centering
      \includegraphics[height=3cm]{separatrix_separated}
    \end{subfigure}
    \begin{subfigure}[c]{.5\linewidth}
      \centering
      \includegraphics[height=3.5cm]{octogon_separated}
    \end{subfigure}
    \caption{Adding one curve}
    \label{separated}
  \end{figure}

\end{proof}

This completes the proof of Theorem \ref{cylinderingenus2}. It is not
completely satisfactory: we would like to prove that every affine
surface contains \textit{strictly affine cylinders}.

\section{Open problems.}
\label{open}
We hope that at this point, most of the problems we are about to
suggest seem natural to the reader.

\vspace{2mm}

We have so far put our hands on several dynamical behaviours for the
directional foliations on our affine surfaces. Very often it happens
that a finite number of \textit{hyperbolic closed leaves} attract all
the others, as it the case for all but one direction on the \GMP
surface. We say in that case that the dynamics is \textbf{hyperbolic}.
\noindent The Camelier-Gutierrez construction discussed in Section
\ref{Cantor} also proves the existence of directional foliations such
that every leaf accumulates to a closed union of leaves which is
transversely a \textit{Cantor set}. Finally it is not to be excluded
that some directional foliations are minimal (it is actually very easy
to build examples of such affine surfaces). Because of the conjectural
picture we are about to draw, we call both late cases
\textbf{exceptional}.

We begin our list of open problems by very specific questions
concerning the $\DD_{1,2}$ example. It is a simple and very explicit
one, but the dynamical questions that it raises are not
straightforwardly answerable.

\begin{enumerate}

\item Does there exist a dense direction on the $\DD_{1,2}$ example?

\item Does there exist a 'Cantor like' direction on the $\DD_{1,2}$
  example?

\item What does the set of hyperbolic directions on the $\DD_{1,2}$
  example look like? Is it dense? Has it full Lebesgue measure?

\end{enumerate}

These might be a good starting point on the way to the general
case. Although the combinatorial arguments used in its proof are
unlikely to generalize to higher genus, Theorem \ref{cylinderingenus2}
suggests that systematic dynamical behaviours are to be expected. Is
it true that

\begin{enumerate}[resume]

\item every affine surface has a closed regular geodesic?

\item every affine surface has a closed, regular and hyperbolic
  geodesic?

\item the set of hyperbolic direction of every affine surface is
  dense? has full measure?
\end{enumerate}

These questions have natural generalizations to the moduli space of
affine surfaces together with a directional foliation. It is possible
that in this setting some of the aforementioned questions are easier
to answer, and that specific surfaces have a very different behaviour
from the generic one. We conjecture that the answers to these three
questions are positive, with a greater reserve about the full measure
one.  There are most probably very interesting things to say about the
loci of exceptional directions, but making guesses without a clear
picture of what happens for hyperbolic ones seems quite dairy.
\vspace{2mm} A very natural direction at this point is to investigate
the geometrical properties of these affine surfaces:

\begin{enumerate}[resume]

\item Which are the affine surfaces having only finitely many saddle
  connections?

\item Is it true that trough a given point on an affine surface always
  passes either a closed geodesic or a saddle connection? It is the
  case in the \GMP example.

\item What does the set of vectors of saddle connections on a given
  surface look like?
\end{enumerate}

Proposition \ref{nolattice} most probably prevents the Veech group of
an affine surface to be a lattice in $\SLtR$. Nonetheless, it seems to
be an interesting invariant of these affine surfaces.

\begin{enumerate}[resume]
\item What kind of Fuchsian groups can appear as Veech groups ?
\end{enumerate}

\vspace{2mm} Finally we want to suggest that a nice source of
questions is trying to describe the set of surfaces having specific
interesting dynamical/geometric properties. For instance:

\begin{enumerate}[resume]

\item Is the set of affine surfaces having an exceptional direction
  dense? generic?

\item Is the set of affine surfaces having \textit{no} exceptional
  direction dense? generic?

\item Does there exists a surface having infinitely many
  \textit{'Cantor like'} directions?

\end{enumerate}
 
\appendix 
\section{Veech's results on the geometry of affine surfaces.}
\label{Veech}

We review in this appendix results of Veech on the geometry of affine
surfaces appearing in \cite{Veech1} and \cite{VeechU}. Note that Veech
works in the more general context of affine surfaces with
singularities.

For the sake of clarity, we will restrict his results to the case
under scrutiny in this paper namely branched affine surface with real
positive linear holonomy. The notes \cite{VeechU} remain unpublished
and Veech kindly allowed us to reproduce here proofs that are
contained in these notes.

\subsection{The property $\mathcal{V}$.}

We say an affine surface $\Sigma$ satisfies \textit{the property}
$\mathcal{V}$ if there is no affine immersion of $\mathbb{H}$ in
$\Sigma$. It is equivalent to ask that $\Sigma$ has no affine
cylinder of angle larger than $\pi$.

\begin{theorem*}[Veech, \cite{VeechU}]
  An equivalent formulation of the property $\mathcal{V}$ is the
  following:

  $(\mathcal{V}')$ Every affine immersion of the open unit disk
  $\mathbb{D} \subset \C$ in $\Sigma$ extends continuously to a map
  $\overline{\mathbb{D}} \rightarrow \Sigma$.
  
\end{theorem*}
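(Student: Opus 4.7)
The plan is to prove both implications by contrapositive.

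For the direction $(\mathcal{V}') \Rightarrow (\mathcal{V})$: suppose $\mathcal{V}$ fails, so there is an affine immersion $\psi : \Hy \to \Sigma$. By the equivalence stated above, $\psi$ saturates to an embedded affine cylinder of angle at least $\pi$ around a closed hyperbolic leaf $\gamma \subset \Sigma$. I would pick any open disk $D \subset \Hy$ whose closure is tangent to $\partial \Hy$ at a single point $q$; then $\psi|_D$ is an affine immersion of a disk into $\Sigma$, and I claim it cannot extend continuously at $q$. Indeed, a sequence $z_n \to q$ along the diameter of $D$ has image $\psi(z_n)$ spiraling around $\gamma$ under the dilation holonomy, hence admits no limit in $\Sigma$; this failure is a direct violation of $\mathcal{V}'$.

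For the direction $(\mathcal{V}) \Rightarrow (\mathcal{V}')$: suppose an affine immersion $\phi : \D \to \Sigma$ does not extend continuously at some $p \in \partial \D$. Inscribe a disk $D' \subset \D$ tangent to $\partial \D$ at $p$ and centered on the radius through $p$. The image under $\phi$ of the radial ray from the center of $D'$ to $p$ has no limit in $\Sigma$ (otherwise an affine chart at the limit would let $\phi$ extend continuously at $p$), so one is in the hypothesis of Proposition \ref{closedgeodesic}, which produces a closed hyperbolic geodesic $\gamma \subset \Sigma$ accumulating the image of the ray, with dilation holonomy $z \mapsto \lambda z$ in a suitable affine chart around $\gamma$. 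Next, develop $\phi|_{D'}$ into a single chart $\widetilde{\phi} : D' \to \C$ in which this holonomy is $z \mapsto \lambda z$. Since $\widetilde{\phi}$ extends affinely to $\overline{D'}$ while $\phi$ does not extend at $p$, the only possibility is that $\widetilde{\phi}(p)$ is the fixed point $0$ of the dilation; hence $\widetilde{\phi}(D')$ is a Euclidean disk whose closure is tangent to the origin. Such a disk subtends a full half-plane of directions at $0$, so for every $\epsilon > 0$ it contains an angular sector at $0$ of aperture $\pi - 2\epsilon$. Quotienting these sectors by $z \mapsto \lambda z$ produces embedded affine cylinders in $\Sigma$ of angle $\pi - 2\epsilon$ around $\gamma$; their nested union as $\epsilon \to 0$ gives an affine immersion of a cylinder of angle $\pi$, equivalently an affine immersion of $\Hy$ into $\Sigma$, contradicting $\mathcal{V}$.

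The main obstacle is the appeal to Proposition \ref{closedgeodesic}, which converts the qualitative failure of $\phi$ to extend at $p$ into the concrete geometric datum of a closed hyperbolic geodesic $\gamma$; without it there is no handle on the trajectories in $\Sigma$ near $p$. The remaining calculation is elementary and rests on two facts: that a Euclidean disk tangent at a boundary point subtends a half-plane of directions there, and that quotients of small sectors of $\C^*$ by $z \mapsto \lambda z$ sit inside $\Sigma$ as honest affine cylinders of the matching angle.
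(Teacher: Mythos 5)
Your proof of the implication $(\mathcal{V}) \Rightarrow (\mathcal{V}')$ is circular in the logical structure of this paper. You reduce everything to Proposition \ref{closedgeodesic}, but that proposition is itself proved by invoking Lemmas \ref{extension1} and \ref{extension2}, and Lemma \ref{extension1} \emph{is} precisely the contrapositive of $(\mathcal{V}) \Rightarrow (\mathcal{V}')$: it states that an affine immersion of $\D$ which does not extend continuously to $\oD$ can be extended to an affine immersion of $\Hy$. In other words, the ``concrete geometric datum'' you extract from Proposition \ref{closedgeodesic} is only available once the statement you are trying to prove is already known. The actual content of the paper's proof is the direct analytic-continuation argument of Lemma \ref{extension1}: from a radius $t \mapsto tz$ along which $\varphi$ has no limit, one picks a regular accumulation point $x$, considers the maximal disks $E_k$ of analytic continuation centered at preimages $w_k$ of $x$ inside disks tangent to $\pD$ at $z$, shows their radii tend to $0$ while their images all contain a fixed chart around $x$, and deduces that the developed pictures $\Delta_k$ are disks of radius tending to infinity with nonempty common intersection, whose union contains a half-plane. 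None of that work appears in your argument; it is exactly the step you would need to supply in place of the appeal to Proposition \ref{closedgeodesic}.

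Two secondary points. First, even granting the closed geodesic $\gamma$, the steps ``$\widetilde{\phi}(p)$ must be the fixed point $0$ of the dilation'' and ``the nested union of cylinders of angle $\pi - 2\epsilon$ gives an immersion of $\Hy$'' are not justified: the latter requires propagating $\phi$ along the orbits of $z \mapsto \lambda z$ using holonomy invariance, which is the content of Lemma \ref{extension2} and must be argued, not asserted. Second, in your direction $(\mathcal{V}') \Rightarrow (\mathcal{V})$ (which the paper deliberately omits), it is not true that \emph{any} disk tangent to $\partial \Hy$ fails to extend; you need to normalize via Lemma \ref{extension2} so that the tangency point is the fixed point of the dilation $z \mapsto \lambda z$, since only the radius heading to that fixed point is forced to spiral onto the closed leaf.
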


We will only give the proof of one sense of the equivalence, namely
that the property $\mathcal{V}$ implies the property $\mathcal{V}'$,
which will be sufficient for our purpose.

\begin{lemma}
  \label{extension1}

  Let $\varphi$ be an affine immersion of the open unit disk
  $\mathbb{D} \subset \C$ in $\Sigma$ that does not extend
  continuously to a map $\overline{\mathbb{D}} \rightarrow
  \Sigma$. Then $\varphi$ extends to an affine immersion
  $\mathbb{H} \longrightarrow \Sigma$.
\end{lemma}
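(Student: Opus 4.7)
The approach is to use Proposition \ref{closedgeodesic} (exactly as in the proof of Proposition \ref{saddleco}, but drawing the opposite conclusion: not a contradiction, but the desired extension) to identify a closed hyperbolic geodesic in $\Sigma$ into which $\varphi$ spirals near $p$, then to extend $\varphi$ using the Hopf-cylinder structure around that closed geodesic.

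\medskip

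\textbf{Setup.} By post-composing with an affine automorphism of $\mathbb{D}$, I may assume $\varphi$ fails to extend at $p = 1 \in \partial\mathbb{D}$. The tangent half-plane at $p$ containing $\mathbb{D}$ is $\mathcal{H} := \{z \in \C : \Re(z) < 1\}$, which is affinely isomorphic to $\Hy$. Since $\mathbb{D}$ is simply connected, lifting $\varphi$ to the universal cover of $\Sigma$ minus its singularities and composing with the developing map yields an affine map $\tilde\varphi : \mathbb{D} \to \C$ of the form $\tilde\varphi(z) = az + b$, $a \neq 0$. This formula extends automatically to all of $\C$; the task reduces to showing that its restriction to $\mathcal{H}$ descends to an affine immersion into $\Sigma$.

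\medskip

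\textbf{Locating the monodromy fixed point.} For $r \in (0,1)$, let $D_r$ be the open disk of radius $r$ centered at $1-r$, tangent to $\partial\mathbb{D}$ at $p$, and let $\gamma_r : [0,1) \to D_r$, $\gamma_r(t) = (1-r) + rt$, be the radial segment from its center to $p$. If $\varphi \circ \gamma_r$ had a limit in $\Sigma$ for every small $r$, these limits would combine with the local affine structure at the limit point to produce a continuous extension of $\varphi$ at $p$, contradicting the hypothesis. Hence for small $r$, $\varphi \circ \gamma_r$ has no limit in $\Sigma$, and Proposition \ref{closedgeodesic} applied to $\varphi|_{D_r}$ together with $\gamma_r$ implies that the image of $\gamma_r$ lies on a closed hyperbolic geodesic $c \subset \Sigma$. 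In the developing picture, $c$ is the image of a ray from the fixed point of a hyperbolic monodromy element $z \mapsto \lambda z$, $\lambda \neq 1$; after a translation, place this fixed point at $0 \in \C$. Since $\tilde\varphi(\gamma_r)$ is a segment of finite length $r|a|$ on this ray, while $\varphi(\gamma_r)$ must wrap around $c$ infinitely often (the source of non-convergence), the only possibility is $|\tilde\varphi(\gamma_r(t))| \to 0$ as $t \to 1$, forcing $\tilde\varphi(p) = a + b = 0$, i.e., $\tilde\varphi(z) = a(z-1)$.

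\medskip

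\textbf{The extension.} With $\tilde\varphi(z) = a(z-1)$, the extension to $\mathcal{H}$ has image
$\{a(z-1) : \Re(z) < 1\} = \{w \in \C : \Re(w/a) < 0\}$,
an open half-plane bounded by the line through $0$ perpendicular to $a$. In particular $0 \notin \tilde\varphi(\mathcal{H})$, so the extended map lands in $\C^*$. Composing with the quotient $\C^* \to \C^*/(z \sim \lambda z)$, which is the universal affine cover of the Hopf-cylinder neighborhood of $c$, and then with the immersion of this cylinder into $\Sigma$, one obtains an affine immersion $\mathcal{H} \to \Sigma$. Since $\mathbb{D} \subset \mathcal{H}$ and the construction is compatible with the developing data, this immersion agrees with $\varphi$ on $\mathbb{D}$.

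\medskip

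\textbf{The main difficulty} is the second paragraph: using Proposition \ref{closedgeodesic} to force $\tilde\varphi(p) = 0$. One needs to understand why a bounded affine developing image can nevertheless fail to yield a continuous map into the compact surface $\Sigma$---the obstruction must come from the lift to the universal cover of $\Sigma^*$ escaping into the open end of a Hopf cylinder, and Proposition \ref{closedgeodesic} is precisely the tool that certifies this spiralling and identifies the resulting closed hyperbolic geodesic. Once the fixed point is located, the descent of the half-plane extension to $\Sigma$ is a direct consequence of the Hopf-cylinder structure.
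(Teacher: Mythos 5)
Your proof is circular: Proposition \ref{closedgeodesic}, which is the engine of your second paragraph, is itself proved in the paper by first invoking Lemma \ref{extension1} (together with Lemma \ref{extension2}) to extend $\varphi$ to a $\lambda$-invariant immersion of a half-plane. In the logical order of the appendix, Lemma \ref{extension1} sits strictly upstream of Proposition \ref{closedgeodesic}; it is Proposition \ref{saddleco} (which you cite as your model) that legitimately uses \ref{closedgeodesic} as a black box, not this lemma. So the key step of your argument --- ``the radial path lies on a closed hyperbolic geodesic, hence $\tilde\varphi(p)$ is the fixed point of a hyperbolic holonomy element'' --- assumes exactly the structure that Lemma \ref{extension1} is meant to establish. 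A second, independent gap is the final descent: knowing that $c$ is a closed hyperbolic geodesic only gives an affine cylinder of \emph{some} positive angle around $c$, whereas descending your half-plane $\{\Re(w/a)<0\}$ through ``the Hopf-cylinder neighbourhood of $c$'' requires that cylinder to have angle at least $\pi$ --- which is essentially equivalent to the conclusion of the lemma. One could repair this last step (the tangent disk $\mathbb{D}$ occupies sectors of angle $\pi-\epsilon$ at $p$ for every $\epsilon$, and the $\lambda$-invariance propagates each such sector to a full cylinder sector, whose union over $\epsilon$ is a half-plane), but you do not do so, and you yourself flag the second paragraph as the unresolved difficulty.

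For comparison, the paper's proof needs neither the closed geodesic nor the holonomy fixed point. It picks a boundary point $z$ at which the radial limit fails, an accumulation point $x$ of the radial path, and a fixed affine chart $U$ at $x$. For a sequence $w_k\to z$ of preimages of $x$, the maximal disk $E_k$ of analytic continuation of $\varphi$ centred at $w_k$ has radius tending to $0$, yet its image must contain the fixed neighbourhood $U$. Reading these continuations in the chart at $x$ therefore produces affine immersions $F_k:\Delta_k\to\Sigma$ of disks whose radii tend to $+\infty$ and which all contain a common disk on which they agree; their union contains a half-plane, giving the extension directly by a rescaling/compactness argument with no appeal to closed geodesics.
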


\begin{proof}
  Since $\varphi$ does not extend to $\partial \D$, there exists
  $z \in \partial D$ such that
  $$ \lim_{t \rightarrow 1}{\varphi(tz)} $$ does not exist. Let
  $\gamma$ be the path $t \mapsto \varphi(tz)$. Let $x$ be an
  accumulation point of $\gamma$. Since $[0,1]$ is connected, we can
  assume that $x$ is not singular. Let $t_k \rightarrow 1$ be an
  increasing sequence such that $\gamma(t_k) \rightarrow x$ and $f$ an
  affine chart at $x$, $f : U \mapsto \Delta$ where $\Delta$ is the
  unit disk centered at $0$ et $U$ an open set containing $x$ such
  that $f(x) = 0$. We denote by $\frac{1}{2} U$ the pre-image by $f$
  in $\Sigma$ of the disk centered at $0$ or radius $\frac{1}{2}$.

  For $k$ large enough, we can assume that $\gamma(t_k)$ belongs to
  $\frac{1}{2} U$. Denote by $V_k$ the pre-image by in $\Sigma$ of the
  disk of radius $\frac{1}{2}$ centered at $\varphi(\gamma(t_k))$. In
  particular $V_k$ contains $x$. We claim that the image under
  $\varphi$ of $D_k$ the open disk centered at $t_kz \in \mathbb{U}$
  tangent to $\partial U$ at $z$ contains $V_k$. Since $\varphi$ does
  not extend at z, there is a closed disk centered a $t_kz$ strictly
  contained in $D_k$ whose image under $\varphi$ is not contained in
  $V_k$. Since this image is a disk concentric at $t_kz$ it must
  contain $V_k$ and in particular $x$. Let $w_k$ be a pre-image of $x$
  in $D_k$.

  Let $E_k$ be the largest disk with center $w_k$ to which $\varphi$
  admits analytic continuation. Since $w_k \rightarrow z$, the radius
  of $E_k$ converges to $0$. Necessarily $\varphi(E_k)$ contains
  $U = f^{-1}(\Delta)$ because $\varphi(E_k)$ is a maximal embedded
  disk of center $x$. Extending the map
  $f^{-1} : \Delta \rightarrow U \subset \Sigma$ by means of $\varphi$
  defines $F_k : \Delta_k \rightarrow \Sigma$. The functions
  $(F_k)_{k \in \mathbb{N}}$ have the following properties:

  \begin{itemize}

  \item $\Delta_k$ is a disk;

  \item $\Delta \subset \Delta$;

  \item $\forall \zeta \in \Delta$ we have that
    $F_k(\zeta) = f^{-1}(\zeta)$;

  \item $\forall k, k' \in \mathbb{N}$, $F_k = F_{k'}$ on
    $\Delta_k \cap \Delta_{k'}$;

  \item the radius of $\Delta_k$ tends to $+\infty$, because the
    radius of $E_k$ tends to $0$.

  \end{itemize}

  Since all the $D_k$ are connected and
  $\bigcap_{k \in \mathbb{N}}{\Delta_k}$ is non empty, the $F_k$
  define an affine immersion

  $$ F : \bigcup_{k \in \mathbb{N}}{\Delta_k} \longrightarrow \Sigma $$

  \noindent and $\bigcup_{k \in \mathbb{N}}{\Delta_k}$ must contain a
  half-plane since it is the union of disk whose radius tend to
  infinity whose intersection is not empty. This proves the lemma.

\end{proof}

\begin{lemma}
  \label{extension2}
  Any affine immersion $ \varphi :\mathbb{H} \longrightarrow \Sigma$
  can be extended to $ \varphi' :\mathbb{H}' \longrightarrow \Sigma$
  such that the latter is invariant by the action by multiplication by
  a positive real number $\lambda \neq 1$.
\end{lemma}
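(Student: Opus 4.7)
The strategy is to find a non-trivial dilation self-identification of $\varphi$ — an affine germ $T(z) = \lambda(z - z_0) + z_0$ with $\lambda \neq 1$ and $\varphi \circ T = \varphi$ locally — and then extend $\varphi$ by iterating $T$. The self-identification comes from compactness of $\Sigma$; ensuring it is a dilation (not just a translation) is the main hurdle.

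For the first step, I take a sequence $z_n$ in $\Hy$ going to infinity. Compactness of $\Sigma$ and finiteness of the singular set yield a subsequence with $\varphi(z_n) \to p$ for some non-singular $p \in \Sigma$. Picking an affine chart $\psi$ around $p$, for $n$ large the map $\psi \circ \varphi$ is defined near $z_n$ as an affine germ $w \mapsto \alpha_n(w - z_n) + \psi(\varphi(z_n))$ with $\alpha_n \in \R_+^*$. For $n \neq m$ large, comparing these yields an affine germ $T_{n,m}$ of linear part $\alpha_n/\alpha_m$ sending a neighborhood of $z_n$ to a neighborhood of $z_m$ and satisfying $\varphi \circ T_{n,m} = \varphi$ locally. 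If some ratio $\alpha_n/\alpha_m \neq 1$ then $T_{n,m}$ is a genuine dilation and we proceed to the extension step. The main obstacle is to rule out the case where all $\alpha_n$ coincide: in that case every $T_{n,m}$ is a translation, and $\varphi$ would factor through a quotient of $\Hy$ by a discrete group of translations (a Euclidean half-cylinder). To exclude this, I would lift $\varphi$ to $\tilde\varphi : \Hy \to \tilde\Sigma$ and compose with the developing map, normalizing so that $\dev \circ \tilde\varphi$ is the inclusion $\Hy \hookrightarrow \C$. Equivariance under the deck group then transfers the holonomy representation $\rho : \pi_1(\Sigma) \to \Afr$ — which contains a non-trivial dilation since $\Sigma$ is strictly affine — into a dilation identification of points inside $\Hy$, realized by some suitable element or composition mapping a piece of $\Hy$ back into itself.

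Once a dilation germ $T(z) = \lambda(z - z_0) + z_0$ is secured, analytic continuation along $\Hy$ (which is simply connected) extends the identity $\varphi \circ T = \varphi$ to the maximal domain in $\Hy \cap T^{-1}(\Hy)$. Setting $\Hy' := \bigcup_{n \in \Z} T^n(\Hy)$ and defining $\varphi'(T^n w) := \varphi(w)$ for $w \in \Hy$ and $n \in \Z$, well-definedness on overlaps follows from the local $T$-invariance, and produces an affine immersion $\varphi' : \Hy' \to \Sigma$ extending $\varphi$ and invariant under $T$. An affine change of coordinate moving $z_0$ to the origin puts $T$ into the form $z \mapsto \lambda z$, delivering the invariance by multiplication by $\lambda$ claimed in the statement.
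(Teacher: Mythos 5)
Your overall architecture matches the paper's: produce an affine self-identification $\varphi\circ T=\varphi$ from recurrence in the compact surface, show $T$ can be taken to be a genuine dilation, then extend $\varphi$ by iterating $T$. The last step is essentially the paper's, and the first step, though carried out differently (the paper follows the vertical ray $[i,i\infty)$, of infinite affine length, to an accumulation point and uses that the image leaf must cross an embedded disk twice along parallel segments), is workable with your sequence $z_n\to\infty$ and the chart germs $A_n$, modulo the quantitative care you would need so that $T_{n,m}$ really maps a neighbourhood of $z_n$ into the domain where $\varphi=\psi^{-1}\circ A_m$. You should also make explicit, as the paper does, that the fixed point of the dilation must lie on the boundary of $\Hy'$ (a dilation fixing an interior point of the domain would contradict local injectivity of the immersion); otherwise the final normalization $z\mapsto\lambda z$ is not justified.

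The genuine gap is the middle step, which you yourself flag as ``the main hurdle'' and then only announce rather than prove. Knowing that the holonomy $\rho$ contains a nontrivial dilation does not by itself yield a dilation self-identification of $\varphi$: with $\dev\circ\tilde\varphi$ normalized to be the inclusion, a self-identification is $\rho(g)$ for a deck transformation $g$ satisfying $g\cdot\tilde\varphi(\Hy)\cap\tilde\varphi(\Hy)\neq\emptyset$, and nothing forces an element $g$ with $\rho(g)$ a dilation to satisfy this intersection condition. The phrase ``some suitable element or composition mapping a piece of $\Hy$ back into itself'' is exactly the object that needs to be constructed, and no construction is given. (Your intermediate claim that in the all-translation case $\varphi$ would factor through a Euclidean half-cylinder is also a non sequitur: the particular identifications $T_{n,m}$ being translations says nothing about all identifications of $\varphi$, nor about discreteness.) The paper takes a different route here: it extends $\varphi$ by iterating whatever identification $z\mapsto\lambda z+a$ it has found, and only then rules out $\lambda=1$ by observing that the extended map would give an immersed infinite flat cylinder, impossible in a compact (strictly) affine surface. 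Some version of this dichotomy, or an actual construction of the intersecting deck element, is needed to close your argument.
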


\begin{proof}
  We show first that such an immersion cannot be one-to-one. Consider
  the geodesic
  $\gamma [0, \infty[ \rightarrow [i, i\infty[ \subset \Hy$. Its image
  by $\varphi$ cannot have a limit in $\Sigma$ for it has infinite
  (relative) length. Let $x \in \Sigma$ be an accumulation point of
  this image, and $U \subset \Sigma$ a closed embedded disk with
  center $x$ such that $\varphi(\gamma)$ is not completely contained
  in $U$. Since $\varphi(\gamma)$ is a leaf of a directional foliation
  on $\Sigma$, it must cross $U$ twice along parallel segment and
  therefore cannot be injective because the image of horizontal
  stripes at the first and second crossing will overlap.

  \vspace{2mm}

  There exists then $v \neq w \in \Hy$ such that
  $\varphi(z) = \varphi(w)$. There exists then an affine map
  $z \mapsto \lambda z + a $ with $a \in \R^*_+$ and $b$ in $\C$ such
  that $w = \lambda v + a$ and for all $z$ in a neighborhood of $v$

$$ \varphi(z) = \varphi(\lambda z + a).$$

The union of the iterated images of $\Hy$ by
$z \mapsto \lambda z + a $ is a half-plane $\Hy'$ to which $\varphi$
extends by analytic continuation and on which the above invariance
relation holds for all $z$. If $\lambda = 1$, the image of $\varphi'$
in $\Sigma$ is an infinite flat cylinder, which is impossible since
$\Sigma$ is compact; therefore $\lambda \neq 1$. The fixed point of
$z \mapsto \lambda z + a $ must lie in $\partial \Hy'$ and up to an
affine transformation we can suppose that $b = 0$.

\end{proof}

\subsection{Delaunay decompositions and triangulations.}

The main result of \cite{Veech1} and \cite{VeechU} combined is
establishing the existence of geodesic triangulations for surfaces not
verifying the only natural obstruction for this to happen. It is given
by the following theorem:

\begin{theorem*}[Veech]
  An affine surface admits a geodesic triangulation if and only if it
  does not contain an embedded open affine cylinder of angle
  $\pi$. Equivalently, an affine surface admits a geodesic
  triangulation if and only if it satisfies the property
  $\mathcal{V}$.
\end{theorem*}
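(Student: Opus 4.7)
The \emph{only if} direction was already sketched in Subsection \ref{triangulations}: if $\Sigma$ contains an embedded open cylinder $C$ of angle $\geq \pi$, any separatrix issued from a singular point on $\partial C$ into the open angular sector of $C$ at that singular point is trapped and accumulates on a closed leaf, hence cannot be a saddle connection. Two consecutive triangulation edges at such a singular point $s$ would therefore have to enclose the whole sector, producing a geodesic triangle with an interior angle $\geq \pi$ at $s$, which is impossible since geodesic triangles in an affine surface are locally Euclidean and their interior angles sum to $\pi$.

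For the \emph{if} direction, I assume property $\mathcal{V}$ (equivalently $\mathcal{V}'$) and plan to construct a Delaunay cell decomposition whose vertex set is $\mathcal{S}$ and whose 2-cells are convex Euclidean polygons bounded by saddle connections, then subdivide each non-triangular cell into triangles by diagonals from a fixed vertex. The key quantity is the function $\rho(p)$ defined for each regular point $p \in \Sigma \setminus \mathcal{S}$ (relative to a local affine chart) as the supremum of radii $r$ such that the exponential map $\exp_p$ embeds the open disk $B(0,r)$ into $\Sigma \setminus \mathcal{S}$; write $D_p$ for its image. Property $\mathcal{V}'$ guarantees that $\exp_p$ extends continuously to $\overline{B(0,\rho(p))}$, and by maximality of $\rho(p)$ the extension must send $\partial B(0, \rho(p))$ to a set meeting $\mathcal{S}$, for otherwise one could slightly enlarge the embedded disk.

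I then call $p$ a \emph{Delaunay center} when $\partial D_p \cap \mathcal{S}$ contains at least three points, and let its Delaunay cell be the convex hull of those points taken in the chart. A standard empty-disk argument shows that any two Delaunay cells either coincide, are disjoint, or share a common edge or vertex, so the family of cells assembles into a CW decomposition of $\Sigma$ with $\mathcal{S}$ as $0$-skeleton and saddle connections as $1$-skeleton. To obtain a geodesic triangulation, I subdivide each non-triangular polygonal cell by drawing diagonals from one chosen vertex: since each cell is convex in the chart, these diagonals remain inside the cell and are saddle connections.

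The foundational step --- extending the maximal embedded disk continuously to its boundary --- is the hard part, and is precisely the content of property $\mathcal{V}'$. The equivalence $\mathcal{V} \Leftrightarrow \mathcal{V}'$ is itself non-trivial and relies on Lemmas \ref{extension1} and \ref{extension2} above, which from any non-extending immersion $\mathbb{D} \to \Sigma$ produce first an affine immersion $\mathbb{H} \to \Sigma$ and then one invariant under a non-trivial dilation, that is, an embedded cylinder of angle $\geq \pi$. A secondary technical difficulty, precisely the point where \cite{VeechU} repairs a gap in \cite{Veech1}, is verifying that the Delaunay cells actually embed and glue consistently into a CW complex; degenerate configurations with more than three cocircular singular points on some $\partial D_p$ also need to be handled by a tie-breaking convention, though this does not affect the final triangulation.
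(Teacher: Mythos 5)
Your proposal follows essentially the same route as the paper: the obstruction ("only if") direction via the trapping behaviour of cylinders of angle $\geq \pi$, and the positive direction via Veech's Delaunay polygonation built from maximal embedded disks whose boundaries meet at least three singular points, with the continuous boundary extension supplied by property $\mathcal{V}'$ (itself reduced to Lemmas \ref{extension1} and \ref{extension2}). Like the paper, you correctly identify the boundary-extension step and the verification that the Delaunay cells embed and glue into a CW complex as the genuinely hard points and defer them to \cite{Veech1} and \cite{VeechU}, so the proposal is consistent with the paper's own (sketched) argument.
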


This theorem is a corollary of the existence of \textit{Delaunay
  partitions} for affine surfaces that Veech deals with. We explain in
the sequel the construction. Let $\Sigma$ be an affine surface
satisfying the property $\mathcal{V}$ and let $x \in \Sigma$ be a
regular point. We are going to distinguish points depending on the
number of singular points on the boundary of the largest immersed disk
at $x$. We denote this number by $\nu(x)$. The set
$\big\{ \nu(x) =1 \big\}$ is a open dense set in the surface. The
special points a those such that $\nu(x) \geq 3$, who form a discrete
and therefore finite set in the surface. At these points one can
consider the largest embedded disk in the surface and consider in this
disk the convex hull of the (at least) three singular point on the
boundary. The crucial (but not completely obvious) facts are that:

\begin{itemize}

\item this convex hull projects onto an \textbf{embedded} convex
  polygon in the surface;

\item the union of such polygons covers the whole surface;

\item such polygons only intersects at their boundary;

\item an intersection of two such polygons is union of some their
  (shared) sides;

\item the set of vertices of such polygons is exactly the set of
  singular points;

\item the union of the interior of these polygons is exactly the set
  $\big\{ \nu(x) =1 \big\}$;

\item the union of the interior of their sides is exactly the set
  $\big\{ \nu(x) =2 \big\}$.
  
\end{itemize}

This decomposition of the surface in convex polygons is called its
\textbf{Delaunay polygonations}, is unique and only depends on its
geometry. A triangulation of each polygon leads to a \textit{geodesic
  triangulation} of the surface.

\vspace{2mm} The crucial fact (we refer to \cite{Veech1} for
details) is that having the property $\mathcal{V}$ implies that
maximal affine embeddings of the disk extend to their boundary, which
is the technical point that one needs to make sure the construction
hinted above can be carried on.

\begin{remark}
  The converse of the triangulation theorem is quite easy. Any
  cylinder of angle at least $\pi$ behave like a 'trap': any geodesic
  entering it never escapes. A surface containing such a cylinder
  therefore cannot be geodesically triangulated, because no edge of
  the triangulation could enter the cylinder and the complement of the
  $1$-skeleton of such a triangulation would not be a union of cells.
\end{remark}

\subsection{Closed geodesics and wild immersions of the disk. }

We consider in this subsection a surface $\Sigma$ which does not
satisfy the property $\mathcal{V}$. This implies that there is an
affine immersion $\varphi : \mathbb{D} \longrightarrow \Sigma$ which
does not extend to $\pD$ the boundary of $\D$. This also implies the
existence of an affine cylinder and therefore closed hyperbolic
geodesics. We give in this subsection a way to localize such a closed
hyperbolic geodesic starting from $\varphi$. We believe that this
result should be attributed to Veech. Even though not stated
explicitly in \cite{VeechU} (probably because Veech did not have in
mind the dynamical questions we are interested in), the statement is
obvious for anyone who has read carefully the sections 23 and 24 of
\cite{VeechU}.

\begin{proposition}[Veech, \cite{VeechU}]
  \label{closedgeodesic}
  Let $\varphi : \mathbb{D} \longrightarrow \Sigma$ be a wild
  immersion of the disk, \textit{i.e.} that does not extend to $\pD$,
  and let $z \in \pD$ such that the path
  $\gamma : t \mapsto \varphi(tz)$ does not have a limit in $\Sigma$
  when $t \rightarrow 1$. Then $\gamma([0,1) )$ is a hyperbolic closed
  geodesic in $\Sigma$ .
\end{proposition}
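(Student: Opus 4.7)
My plan is to apply Lemmas \ref{extension1} and \ref{extension2} sequentially to the wild immersion $\varphi$, in order to produce an affine cylinder structure in a neighbourhood of $\gamma([0,1))$ for which $\gamma$ is precisely the core curve.

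First, since $\varphi$ does not extend to $\pD$, Lemma \ref{extension1} produces an affine immersion $\tilde\varphi : \mathbb{H} \to \Sigma$ by analytic continuation of $\varphi$. I would examine the construction in the proof of that lemma carefully to pinpoint where $\gamma$ lives in $\mathbb{H}$: the half-plane arises as the union of disks $\Delta_k$ in an affine chart $f$ around a regular accumulation point $x$ of $\gamma$, and $x$ corresponds to the base point $f(x)=0$ of that chart. The sequence of preimages of $\gamma(t_k)$ converging to $x$ in the chart, together with the fact that the disks $D_k \subset \D$ in the source are tangent to $\pD$ at the single point $z$, identifies the image of $\gamma$ with a geodesic ray in $\mathbb{H}$ that heads to a distinguished boundary point at infinity corresponding to $z$.

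Next, I would apply Lemma \ref{extension2} to $\tilde\varphi$ to obtain an extension $\varphi' : \mathbb{H}' \to \Sigma$ together with a real number $\lambda > 0$, $\lambda \neq 1$, such that $\varphi'(\lambda w) = \varphi'(w)$ identically. After the affine change of coordinates placing the fixed point of $w \mapsto \lambda w$ at $0 \in \partial \mathbb{H}'$, the crucial geometric point to verify is that the ray to which $\gamma$ extends is precisely a ray emanating from $0$. Heuristically, this is because the non-convergence of $\gamma$ in $\Sigma$ forces it to head toward the only point of $\partial \mathbb{H}'$ where the cyclic group generated by $w \mapsto \lambda w$ fails to act freely, namely the fixed point; equivalently, it is this fixed point that is the origin of the accumulation phenomenon that produced the invariance in Lemma \ref{extension2}.

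Once this identification is made, the conclusion is immediate: a ray from $0$ in $\mathbb{H}'$ is preserved as a set by $w \mapsto \lambda w$, so its image under $\varphi'$ is a closed curve in $\Sigma$, geodesic as the image of a straight line under an affine immersion, and with first return map equal to multiplication by $\lambda \neq 1$ in the cylinder coordinates. This is exactly a closed hyperbolic geodesic, and comparing with the definition $\gamma(t) = \varphi(tz)$ shows it coincides with $\gamma([0,1))$. The main obstacle is the bookkeeping in the second paragraph: showing rigorously that the ray associated to $\gamma$ in $\mathbb{H}'$ terminates at the fixed point of the dilation rather than elsewhere on $\partial \mathbb{H}'$. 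This is where one has to exploit the precise way the half-plane $\mathbb{H}$ was built from disks tangent to $\pD$ at the single point $z$ in the proof of Lemma \ref{extension1}, matching the ``direction of wildness'' of $\varphi$ at $z$ with the attracting/repelling fixed point produced by Lemma \ref{extension2}.
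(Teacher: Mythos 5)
Your proposal follows the paper's proof exactly: apply Lemma \ref{extension1} and then Lemma \ref{extension2} to extend $\varphi$ to a half-plane immersion invariant under $w \mapsto \lambda w$, argue that $\gamma$ must head toward the fixed point $0 \in \partial \mathbb{H}'$ (the paper justifies this just as tersely, by noting that otherwise $\gamma$ would have a limit in $\Sigma$), and conclude that the $\lambda$-invariant ray from $0$ projects to a closed hyperbolic geodesic. The step you flag as the main obstacle is handled with the same one-line justification in the paper, so your outline is faithful to the intended argument.
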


\begin{proof}
  According to both Lemma \ref{extension1} and \ref{extension2}
  $\varphi$ extends to a maximal
  $\varphi' : \Hy \longrightarrow \Sigma$ which is invariant by
  multiplication by a certain positive number $\lambda \neq
  1$. $\gamma$ in this extension must head toward $0 \in \partial \Hy$
  for otherwise it would have a limit in $\Sigma$. Its image is
  therefore a closed hyperbolic leave because it is invariant by the
  action of $\lambda$.
\end{proof}

\bibliographystyle{alpha} \bibliography{biblio}
 
\end{document}